\DeclareMathOperator{\tr}{tr}
\DeclareMathOperator{\dvol}{dV}
\DeclareMathOperator{\darea}{dA}
\DeclareMathOperator{\Ric}{Ric}
\DeclareMathOperator{\Rm}{Rm}
\DeclareMathOperator{\logpart}{lp}
\newcommand{\defn}[1]{{\boldmath\bfseries#1}}
\newcommand{\cg}{\widetilde{g}}
\newcommand{\cf}{\widetilde{f}}
\newcommand{\cu}{\widetilde{u}}
\newcommand{\cv}{\widetilde{v}}
\newcommand{\cw}{\widetilde{w}}
\newcommand{\cD}{\widetilde{D}}
\newcommand{\cI}{\widetilde{I}}
\newcommand{\cR}{\widetilde{R}}
\newcommand{\cT}{\widetilde{T}}
\newcommand{\cnabla}{\widetilde{\nabla}}
\newcommand{\cDelta}{\widetilde{\Delta}}
\newcommand{\cGamma}{\widetilde{\Gamma}}
\newcommand{\cmE}{\widetilde{\mathcal{E}}}
\newcommand{\cmG}{\widetilde{\mathcal{G}}}
\DeclareMathOperator{\cRm}{\widetilde{\Rm}}
\newcommand{\hf}{\widehat{f}}
\newcommand{\hu}{\widehat{u}}
\newcommand{\hv}{\widehat{v}}
\newcommand{\hvarphi}{\widehat{\varphi}}
\newcommand{\hpsi}{\widehat{\psi}}
\newcommand{\lv}{\lvert}
\newcommand{\rv}{\rvert}
\newcommand{\lV}{\lVert}
\newcommand{\rV}{\rVert}
\newcommand{\mE}{\mathcal{E}}
\newcommand{\mG}{\mathcal{G}}
\newcommand{\mI}{\mathcal{I}}
\newcommand{\mL}{\mathcal{L}}
\newcommand{\mU}{\mathcal{U}}
\newcommand{\kD}{\mathfrak{D}}
\newcommand{\kc}{\mathfrak{c}}
\newcommand{\bN}{\mathbb{N}}
\newcommand{\bR}{\mathbb{R}}
\newcommand{\onf}{\mathsf{n}}
\def\sideremark#1{\ifvmode\leavevmode\fi\vadjust{\vbox to0pt{\vss
 \hbox to 0pt{\hskip\hsize\hskip1em
 \vbox{\hsize3cm\tiny\raggedright\pretolerance10000
 \noindent #1\hfill}\hss}\vbox to8pt{\vfil}\vss}}}
\newcommand{\suchthat}{\mathrel{}:\mathrel{}}
\newtheorem{theorem}{Theorem}[section]
\newtheorem{proposition}[theorem]{Proposition}
\newtheorem{lemma}[theorem]{Lemma}
\newtheorem{corollary}[theorem]{Corollary}
\theoremstyle{definition}
\theoremstyle{remark}
\numberwithin{equation}{section}
\begin{document}

\title[Formal self-adjointness of curved Ovsienko--Redou operators]{Formal self-adjointness of a family of conformally invariant bidifferential operators}
\author{Jeffrey S.\ Case}
\address{Department of Mathematics \\ Penn State University \\ University Park, PA 16802 \\ USA}
\email{jscase@psu.edu}
\author{Zetian Yan}
\address{Department of Mathematics \\ UC Santa Barbara \\ Santa Barbara \\ CA 93106 \\ USA}
\email{ztyan@ucsb.edu}
\keywords{conformally invariant operators; Ovsienko--Redou operators; formal self-adjointness}
\subjclass[2020]{Primary 53C25}
\begin{abstract}
 We prove that the curved Ovsienko--Redou operators and a related family of differential operators are formally self-adjoint.
 This verifies two conjectures of Case, Lin, and Yuan.
\end{abstract}
\maketitle

\section{Introduction}
\label{sec:intro}

The GJMS operator of order $2k$, introduced by Graham, Jenne, Mason, and Sparling~\cite{GJMS1992}, is a conformally invariant differential operator with leading-order term $\Delta^k$ defined on any Riemannian manifold $(M^n,g)$ of dimension $n \geq 2k$.
It can be defined by suitably restricting the power $\cDelta^k$ of the Laplacian of the Fefferman--Graham ambient space~\cite{FeffermanGraham2012}.
While $\cDelta^k$ is formally self-adjoint in the ambient space, this does not immediately imply that the GJMS operators are formally self-adjoint.

There are now two proofs that the GJMS operators are formally self-adjoint.
The first, due to Graham and Zworski~\cite{GrahamZworski2003} (cf.\ \cite{FeffermanGraham2002}), uses a realization of the GJMS operators as poles of the scattering operator of a Poincar\'e space.
The second, due to Juhl~\cite{Juhl2013} (cf.\ \cite{FeffermanGraham2013}), uses a remarkable formula for the GJMS operators in terms of a family of formally self-adjoint second-order differential operators.
The first approach relies heavily on the equivalent characterization of the GJMS operators as obstructions to formally extending a function to be harmonic in the ambient space, while the second approach requires a complicated combinatorial argument.

Case, Lin, and Yuan~\cite{CaseLinYuan2022or} gave two generalizations of the GJMS operators;
here we discuss those which are plausibly formally self-adjoint.
First is a family of conformally invariant bidifferential operators
\begin{equation*}
 D_{2k} \colon \mE\left[ -\frac{n-2k}{3} \right]^{\otimes 2} \to \mE\left[-\frac{2n+2k}{3}\right]
\end{equation*}
of total order $2k$.
They are called the \defn{curved Ovsienko--Redou operators} bcause they generalize a family of bidifferential operators constructed by Ovsienko and Redou~\cite{OvsienkoRedou2003} on the sphere.
The operators $D_{2k}$ are determined ambiently by
\begin{align*}
 \cD_{2k}(\cu \otimes \cv) & := \sum_{r+s+t=k} a_{r,s,t} \cDelta^r\left( (\cDelta^s\cu)(\cDelta^t\cv) \right) , \\
 a_{r,s,t} & := \frac{k!}{r!s!t!}\frac{\Gamma\bigl(\frac{n+4k}{6}-r\bigr)\Gamma\bigl(\frac{n+4k}{6}-s\bigr)\Gamma\bigl(\frac{n+4k}{6}-t\bigr)}{\Gamma\bigl(\frac{n-2k}{6}\bigr)\Gamma\bigl(\frac{n+4k}{6}\bigr)^2} ,
\end{align*}
on $\cmE\bigl[-\frac{n-2k}{3}\bigr] \otimes \cmE\bigl[-\frac{n-2k}{3}\bigr]$;
in this paper, tensor products are over $\bR$.
Second is a family of conformally invariant differential operators
\begin{equation*}
 D_{2k,\mI} \colon \mE\left[ -\frac{n-2k-2\ell}{2} \right] \to \mE\left[-\frac{n+2k+2\ell}{2} \right]
\end{equation*}
of order $2k$ associated to a scalar Weyl invariant $\mI$ of weight $-2\ell$.
These are determined ambiently by a scalar Riemannian invariant $\cI$ of weight $-2\ell$ and
\begin{align*}
 \cD_{2k,\cI}(\cu) & := \sum_{r+s=k} b_{r,s}\cDelta^r\left( \cI \cDelta^s\cu \right) , \\
 b_{r,s} & := \frac{k!}{r!s!}\frac{(\ell+s-1)!(\ell+r-1)!}{(\ell-1)!^2} ,
\end{align*}
on $\cmE\bigl[-\frac{n-2k-2\ell}{2}\bigr]$.
See \cref{sec:bg} for an explanation of our notation and a description of how the ambient formulas determine conformally invariant operators.

The symmetry of the coefficients $a_{r,s,t}$ and $b_{r,s}$ and the observations
\begin{align*}
 \cmE\left[-\frac{n-2k}{3}\right] \ni \cu, \cv, \cw & \Longrightarrow \cu\cD_{2k}(\cv \otimes \cw) \in \cmE[-n] , \\
 \cmE\left[-\frac{n-2k-2\ell}{2}\right] \ni \cu, \cv & \Longrightarrow \cu\cD_{2k,\cI}(\cv) \in \cmE[-n] ,
\end{align*}
imply that the induced operators $D_{2k}$ and $D_{2k,\mI}$ are plausibly formally self-adjoint.
Case, Lin, and Yuan~\cite{CaseLinYuan2022or} conjectured their formally self-adjointness and verified this when $k \leq 3$.
The difficulty in this problem is that there is not an equivalent description of these operators as an obstruction to solving some second-order PDE, and hence the Graham--Zworski argument cannot be adapted to $D_{2k}$ or $D_{2k,\mI}$.
Case, Lin, and Yuan instead adapted Juhl's approach, with the constraint $k \leq 3$ due to the difficulty of the combinatorial argument.

In this paper we develop a new, conceptually simple approach to proving formal self-adjointness which verifies the Case--Lin--Yuan conjecture:

\begin{theorem}
 \label{ovsienko-redou-are-fsa}
 Let $(M^n,g)$ be a Riemannian manifold and let $k \in \bN$;
 if $n$ is even, then assume additionally that $k \leq n/2$.
 Then $D_{2k} \colon \mE\bigl[-\frac{n-2k}{3}\bigr]^{\otimes 2} \to \mE\bigl[-\frac{2n+2k}{3}\bigr]$ is formally self-adjoint.
\end{theorem}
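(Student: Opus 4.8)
The plan is to prove the equivalent statement that the trilinear form
\begin{equation*}
 \Phi(u,v,w) := \int_M u\,D_{2k}(v\otimes w),
\end{equation*}
whose integrand is a well-defined weight $-n$ density by the weight count in the introduction, is symmetric under all permutations of $u,v,w$. Since the coefficients $a_{r,s,t}$ are symmetric in $(s,t)$, the operator $D_{2k}$ is already symmetric in its two arguments, so $\Phi$ is symmetric in $v\leftrightarrow w$; the real content is symmetry in $u\leftrightarrow v$. The guiding idea is that $\Phi$ should equal the restriction to $M$ of a manifestly symmetric ambient trilinear expression, with the symmetry inherited from the full $S_3$-symmetry of $a_{r,s,t}$ together with the formal self-adjointness of the ambient Laplacian $\cDelta$.

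First I would fix homogeneous ambient extensions $\cu,\cv,\cw$ of $u,v,w$ of degree $-\tfrac{n-2k}{3}$ and identify the boundary integral with a regularized ambient integral,
\begin{equation*}
 \int_M u\,D_{2k}(v\otimes w) = c_{n,k}\,\Rint \cu\,\cD_{2k}(\cv\otimes\cw),
\end{equation*}
where $\Rint$ denotes a finite-part integral over the ambient space (regularized in the fiber direction of the metric bundle and in the normal variable $\rho$) and $c_{n,k}$ is an explicit nonzero constant. Two points must be checked here: that the right-hand side is independent of the chosen homogeneous extensions, which uses that $\cD_{2k}$ is defined only modulo the ideal generated by $\rho$ and that this ambiguity is annihilated by the finite-part operation; and that the regularization is compatible with the genuine conformally invariant integral $\int_M$ of a weight $-n$ density.

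Next, using formal self-adjointness of $\cDelta$ with respect to the ambient volume form, I would integrate by parts inside $\Rint$ to move each power $\cDelta^r$ off the product and onto $\cu$, obtaining
\begin{equation*}
 \Rint \cu\,\cD_{2k}(\cv\otimes\cw) = \sum_{r+s+t=k} a_{r,s,t}\,\Rint (\cDelta^r\cu)(\cDelta^s\cv)(\cDelta^t\cw).
\end{equation*}
The right-hand side is a regularized integral of a product of three functions, hence invariant under permuting $\cu,\cv,\cw$ together with the corresponding permutation of $(r,s,t)$; since $a_{r,s,t}$ is fully symmetric, relabeling the summation indices shows the whole expression is symmetric in $u,v,w$. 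Combined with the previous step this gives the full symmetry of $\Phi$, and hence formal self-adjointness.

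The main obstacle is the integration-by-parts step: $\cDelta$ is self-adjoint for the ambient volume form only up to a divergence, and the fiber (Euler direction) and normal ($\partial_\rho$) components of that divergence do \emph{not} restrict to total derivatives on $M$, so they are not obviously annihilated by $\int_M$. The role of the finite-part regularization $\Rint$ is precisely to kill these terms: after analytic continuation in the regularizing parameter, the boundary contributions become residues that vanish in the relevant range of degrees. This is also where the hypothesis enters — for $n$ even the ambient metric is obstructed at order $\rho^{n/2}$, and the constraint $k\le n/2$ keeps all ambient data below the obstruction so that no anomalous (e.g.\ logarithmic) boundary term survives, whereas for $n$ odd the ambient metric is unobstructed and no such restriction is needed. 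Verifying the vanishing of these regularized boundary terms, together with pinning down the constant $c_{n,k}$ and the extension-independence in the first step, is the technical heart of the argument.
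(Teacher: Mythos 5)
Your guiding philosophy---reduce formal self-adjointness to the full $S_3$-symmetry of $a_{r,s,t}$ plus self-adjointness of $\cDelta$ under a regularized ambient integration by parts---is the same one driving the paper, but the step you defer (``the technical heart'') is exactly where the entire content of the theorem lives, and as stated your argument has a genuine gap there. The identity $\Rint \cu\,\cD_{2k}(\cv\otimes\cw)=\sum_{r+s+t=k} a_{r,s,t}\Rint(\cDelta^r\cu)(\cDelta^s\cv)(\cDelta^t\cw)$ requires the divergence terms produced by moving each outer $\cDelta^r$ onto $\cu$ to be annihilated by $\Rint$, and this cannot be quoted as a routine residue computation: boundary integrals over the cutoff hypersurface contribute at $O(1)$, i.e.\ precisely to a Hadamard finite part, so a finite-part functional does \emph{not} annihilate them in general. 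Worse, the finite part of the integral of a weight $-n$ density depends on the higher-order terms of the chosen homogeneous extensions, so even your opening identity $\int_M u\,D_{2k}(v\otimes w)=c_{n,k}\Rint\cu\,\cD_{2k}(\cv\otimes\cw)$ is extension-dependent as written. Separately, the hypothesis $k\le n/2$ for $n$ even is not about suppressing anomalous boundary terms in the integration by parts; it is what makes $\cD_{2k}$ independent of the ambiguity of the ambient metric so that $D_{2k}$ is well defined at all (\cref{dependence}).

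The paper closes precisely this gap by extracting the coefficient of $\log\varepsilon$ rather than a finite part. Restricting to the Poincar\'e space $\{\cg(T,T)=-1\}$, the Dirichlet form $\int_M u_0 D_{2k}(u_1\otimes u_2)\dvol_g$ is realized as the $\log(1/\varepsilon)$-coefficient of $\int_{r>\varepsilon}\bigl(\cu_0\,\cD_{2k}(\cu_1\otimes\cu_2)\bigr)|_{\mathring{X}}\dvol_{g_+}$ (\cref{log-energy}); unlike the finite part, this coefficient is canonical. After decomposing $\cD_{2k}$ into the operators $\cD_{2k-2s,w,\cDelta^s\cu}$ and writing their restrictions as $\sum c_{i,j}\,P_{2i,w}\circ\hf\circ P_{2j,w}$ with $c_{i,j}=c_{j,i}$ and each $P_{2j,w}$ a product of operators $\Delta_{g_+}+\mathrm{const}$ (\cref{expand-Df}), the antisymmetrized Dirichlet form becomes, by the Divergence Theorem, a boundary integral over $\{r=\varepsilon\}$, which is $O(1)$ and therefore has vanishing log part (\cref{fsa-key-step}). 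The single observation that boundary integrals cannot produce logarithmic terms is the missing idea that turns ``boundary terms don't matter'' into a one-line fact. To salvage your ambient-space version you would need to replace $\Rint$ by such a log-coefficient functional and still control the fiber ($s$-direction) and normal ($\rho$-direction) boundary contributions, which essentially amounts to reconstructing the paper's passage to the Poincar\'e space.
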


\begin{theorem}
 \label{linear-are-fsa}
 Let $(M^n,g)$ be a Riemannian manifold, let $k \in \bN$, and let $\mI$ be a scalar Weyl invariant of weight $-2\ell$ in the ambient space;
 if $n$ is even, then assume additionally that $k \leq n/2$ if $\ell=0$ and $k+\ell \leq n/2+1$ if $\ell\geq 1$.
 Then $D_{2k,\mI} \colon \mE\bigl[-\frac{n-2k-2\ell}{2}\bigr] \to \mE\bigl[-\frac{n+2k+2\ell}{2}\bigr]$ is formally self-adjoint.
\end{theorem}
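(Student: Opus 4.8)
The plan is to split the proof into two logically independent pieces: an \emph{ambient} self-adjointness statement, which is an essentially algebraic consequence of the symmetry $b_{r,s}=b_{s,r}$, together with a \emph{transfer principle} asserting that an operator which is formally self-adjoint on the Fefferman--Graham ambient space $(\cmG,\cg)$ descends to a formally self-adjoint operator on $M$. Since formal self-adjointness is a local statement, I may take $M$ closed and test against compactly supported densities, so that no lateral boundary terms arise. Writing $\cu,\cv\in\cmE\bigl[-\frac{n-2k-2\ell}{2}\bigr]$ for homogeneous extensions of $u,v\in\mE\bigl[-\frac{n-2k-2\ell}{2}\bigr]$, the defining property of $D_{2k,\mI}$ gives
\begin{equation*}
 \int_M\bigl(u\,D_{2k,\mI}(v)-v\,D_{2k,\mI}(u)\bigr)\dvol_g=\int_M\bigl(\cu\,\cD_{2k,\cI}\cv-\cv\,\cD_{2k,\cI}\cu\bigr)\big|_M\,\dvol_g ,
\end{equation*}
so the theorem reduces to showing that the restriction to $M$ of the antisymmetric ambient expression integrates to zero.

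For the ambient self-adjointness, recall that $\cDelta$ is formally self-adjoint and that multiplication by the real scalar $\cI$ is self-adjoint. Transferring each factor $\cDelta^r$ onto the opposite argument via the iterated first Green identity for $\cDelta$ and relabelling $r\leftrightarrow s$, the symmetry $b_{r,s}=b_{s,r}$ yields the pointwise identity
\begin{equation*}
 \cu\,\cD_{2k,\cI}\cv-\cv\,\cD_{2k,\cI}\cu=\divsymb_{\cg}\cW(\cu,\cv),
\end{equation*}
where $\cW(\cu,\cv)$ is an explicit ambient vector field built bilinearly from $\cu,\cv,\cI$ and finitely many of their $\cnabla$-derivatives. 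Indeed, after transferring the $\cDelta^r$'s the non-divergence ``bulk'' becomes $\sum_{r+s=k}b_{r,s}\,\cI\,(\cDelta^r\cu)(\cDelta^s\cv)$, which is manifestly symmetric in $\cu,\cv$ and hence cancels in the difference, while the remaining terms are exact ambient divergences. (The same computation with $a_{r,s,t}$ fully symmetric handles $D_{2k}$, the relevant trilinear bulk term $\sum a_{r,s,t}(\cDelta^r\cu)(\cDelta^s\cv)(\cDelta^t\cw)$ being totally symmetric.)

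It therefore remains to prove the transfer principle: that $\int_M(\divsymb_{\cg}\cW)\big|_M\,\dvol_g=0$. In Fefferman--Graham normal form $\cg=t^2g_\rho+2\rho\,dt^2+2t\,dt\,d\rho$, with $M=\{\rho=0\}$ the null cone of $T=t\partial_t$ and $\dvol_{\cg}\propto t^{n+1}\,\dvol_{g_\rho}\,dt\,d\rho$. Since the integrand has weight $-n$, integrating the divergence identity over a slab $\{1\le t\le a,\ 0\le\rho\le\eta\}$ and differentiating at $\eta=0$ expresses $\int_M(\divsymb_{\cg}\cW)\big|_M\,\dvol_g$ as a transverse flux of $\cW$ at $M$; the purely tangential part of $\cW$ contributes an exact $g_\rho$-divergence on $M$ and drops out because $M$ is closed. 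The main obstacle is to show that this transverse flux vanishes: because $M$ is null for $\cg$, it does \emph{not} vanish for formal or homogeneity reasons alone, and one must use the detailed structure of $\cW$ — in particular that the bulk cancels precisely because $b_{r,s}=b_{s,r}$ — together with the order to which $\cu$, $\cv$, and the ambient metric are canonically determined. This is exactly where the dimension hypotheses enter: for even $n$ the ambient metric is defined only modulo $O(\rho^{n/2})$, and the bounds $k\le n/2$ (respectively $k+\ell\le n/2+1$) guarantee that the transverse flux is computed from unambiguous jets and that the cancellation forced by the coefficient symmetry survives restriction to the cone. I expect this vanishing of the transverse flux at the null cone to be the heart of the argument.
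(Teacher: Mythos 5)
There is a genuine gap, and it sits exactly where you place it: the ``transfer principle.'' Your ambient computation is fine --- since $\cDelta$ is formally self-adjoint with respect to $\dvol_{\cg}$ and $b_{r,s}=b_{s,r}$, one does get $\cu\,\cD_{2k,\cI}\cv-\cv\,\cD_{2k,\cI}\cu=\divsymb_{\cg}\cW(\cu,\cv)$ on $\cmG$. But the step from this ambient identity to $\int_M(\divsymb_{\cg}\cW)|_M\,\dvol_g=0$ is precisely the non-trivial content of the theorem, and your proposal ends by saying you \emph{expect} the transverse flux at the null cone to vanish rather than proving it. This is not a small omission: the paper's introduction stresses that formal self-adjointness of $\cDelta^k$ in the ambient space ``does not immediately imply'' formal self-adjointness of the induced operators, and the two known proofs for the GJMS operators (Graham--Zworski via scattering, Juhl via an explicit combinatorial formula) exist precisely because no formal or homogeneity argument at the cone does the job. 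The cone $\{\rho=0\}$ is null for $\cg$, its conormal $dQ$ pairs to $2T$ which is \emph{tangent} to the cone, and the slab computation you sketch reduces the question to the $\eta$-derivative of the flux through $\{\rho=\eta\}$ at $\eta=0$, which depends on the detailed jets of $\cW$ transverse to the cone; nothing you have written controls these.

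The paper circumvents the null cone entirely. It restricts the ambient operator not to $\{\rho=0\}$ but to the Poincar\'e hypersurface $\mathring{X}=\{\cg(T,T)=-1\}$, where $g_+$ is a genuine (asymptotically hyperbolic) Riemannian metric and the Divergence Theorem applies without subtlety. \Cref{expand-Df} rewrites $\cD_{2k,w,\cf}|_{\mathring{X}}$ as $\sum c_{i,j}\,P_{2i,w}\circ\hf\circ P_{2j,w}$ with $c_{i,j}=c_{j,i}$ and each $P_{2j,w}$ a product of operators $\Delta_{g_+}+c$; \cref{log-energy} identifies the Dirichlet form on $M$ as the coefficient of $\log\varepsilon$ in the regularized Dirichlet form over $\{r>\varepsilon\}$; and \cref{fsa-key-step} observes that the antisymmetrized regularized form is a boundary integral over $\{r=\varepsilon\}$, hence $O(1)$ and free of log terms. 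The vanishing of the log coefficient is what replaces your unproven flux statement. To complete your argument you would need to supply an independent proof that the transverse flux (more precisely, its first $\rho$-derivative at the cone) vanishes, and it is not clear this can be done without effectively reproducing one of the existing arguments.
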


The restrictions on $k$ and $k+\ell$ when $n$ is even ensure that the operators $D_{2k}$ and $D_{2k,\mI}$, respectively, are independent of the ambiguity of the ambient metric;
see \cref{sec:bg} for details.
When $\mI=1$, the operator $D_{2k,\mI}$ is a nonzero multiple of the GJMS operator of order $2k$, and so \cref{linear-are-fsa} gives a new proof of its formal self-adjointness.

The main idea in the proofs of \cref{ovsienko-redou-are-fsa,linear-are-fsa} is to restrict the ambient operators $\cD_{2k}$ and $\cD_{2k,\cI}$ to the Poincar\'e space $(\mathring{X},g_+)$ equivalent to a given ambient space.
Their Dirichlet forms
\begin{align*}
 \int_{r > \varepsilon} \left.\left( \cu \cD_{2k}( \cv \otimes \cw) \right)\right|_{\mathring{X}} \dvol_{g_+} \quad\text{and}\quad \int_{r > \varepsilon} \left.\left( \cu \cD_{2k,\cI}(\cv) \right)\right|_{\mathring{X}} \dvol_{g_+}
\end{align*}
are both in $O(\log\varepsilon)$, and the coefficient of the log terms are the Dirichlet forms of the respective operators $D_{2k}$ and $D_{2k,\mI}$ on $M$.
The Divergence Theorem implies that there are polydifferential operators $B$ and $B_{\cI}$ such that
\begin{align*}
 \int_{r>\varepsilon} \left. \left( \cu D_{2k}( \cv \otimes \cw) - \cv \cD_{2k}( \cu \otimes \cw) \right) \right|_{\mathring{X}} \dvol_{g_+} & = \int_{r=\varepsilon} B(u \otimes v \otimes w) \darea_{g_+} , \\
 \int_{r>\varepsilon} \left.\left( \cu \cD_{2k,\cI}(\cv) - \cv \cD_{2k,\cI}(\cu) \right) \right|_{\mathring{X}} \dvol_{g_+} & = \int_{r=\varepsilon} B_{\cI}(u \otimes v) \darea_{g_+} .
\end{align*}
Moreover, the right-hand sides cannot have a log term---they are in $O(1)$---and hence $D_{2k}$ and $D_{2k,\mI}$ are formally self-adjoint;
see \cref{sec:conclusion} for details.

This paper is organized as follows:

In \cref{sec:bg} we recall necessary background material on the ambient space and give a unified construction of the operators $D_{2k}$ and $D_{2k,\mI}$ of \cref{ovsienko-redou-are-fsa} and \cref{linear-are-fsa}, respectively.
Our construction differs slightly from that of Case, Lin, and Yuan.
We normalize our operators so that $D_{2k}$ coincides with their construction of the curved Ovsienko--Redou operators.
While the operators $D_{2k,\mI}$ do not coincide with the linear operators constructed by Case, Lin, and Yuan~\cite{CaseLinYuan2022or}*{Theorem~1.7}, the linear spans of our respective operators agree.

In \cref{sec:scattering} we describe the restrictions of $\cD_{2k}$ and $\cD_{2k,\cI}$ to $(\mathring{X},g_+)$ and express them in terms of linear combinations of compositions of $\Delta_{g_+} + c$ for constants $c \in \bR$.

In \cref{sec:conclusion} we realize the Dirichlet forms for $D_{2k}$ and $D_{2k,\mI}$ in terms of the leading-order term of the corresponding Dirichlet forms in $(\mathring{X},g_+)$.
We then use the simple formulas for the latter operators to deduce \cref{ovsienko-redou-are-fsa,linear-are-fsa}.

\section{Ambient spaces and the Ovsienko--Redou operators}
\label{sec:bg}

In this section we recall the construction~\cite{CaseLinYuan2022or} of the curved Ovsienko--Redou operators and their related linear operators.

\subsection{Ambient spaces}
\label{subsec:ambient}
We begin by recalling the relevant aspects of the ambient space, following Fefferman and Graham~\cite{FeffermanGraham2012}.

Let $(M^n,\kc)$ be a conformal manifold of signature $(p,q)$.
Denote
\begin{equation*}
 \mG := \left\{ (x,g_x) \suchthat x \in M, g \in \kc \right\} \subset S^2T^\ast M 
\end{equation*}
and let $\pi \colon \mG \to M$ be the natural projection.
We regard $\mG$ as a principal $\bR_+$-bundle with dilation $\delta_\lambda \colon \mG \to \mG$, $\lambda \in \bR_+$, given by
\begin{equation*}
 \delta_\lambda (x, g_x) := (x , \lambda^2 g_x) .
\end{equation*}
Denote by $T := \left. \frac{\partial}{\partial\lambda}\right|_{\lambda=1} \delta_\lambda$ the infinitesimal generator of $\delta_\lambda$.
The canonical metric is the degenerate metric ${\boldsymbol{g}}$ on $\mG$ defined by
\begin{equation*}
    {\boldsymbol{g}}(X,Y) := g_x(\pi_\ast X, \pi_\ast Y)
\end{equation*}
for $X,Y\in T_{(x,g_x)}\mG$.
Note that $\delta_\lambda^\ast \boldsymbol{g} = \lambda^2\boldsymbol{g}$.

A choice of representative $g \in \kc$ determines an identification $\bR_+ \times M \cong \mG$ via $(t,x)\cong (x,t^2g_x)$.
In these coordinates, $T_{(t,x)} = t\partial_t$ and $\boldsymbol{g}_{(t,x)} = t^2\pi^\ast g$.

Extend the projection and dilation to $\mG \times \bR$ in the natural way:
\begin{align*}
 \pi( x, g_x, \rho) & := x , \\
 \delta_\lambda( x, g_x, \rho) & := ( x , \lambda^2 g_x , \rho ) ,
\end{align*}
where $\rho$ denotes the coordinate on $\bR$.
We abuse notation and also denote by $T$ the infinitesimal generator of $\delta_\lambda \colon \mG \times \bR \to \mG \times \bR$.
Let $\iota \colon \mG \to \mG \times \bR$ denote the inclusion $\iota(x,g_x) := (x,g_x,0)$.
A \defn{pre-ambient space} for $(M^n,\kc)$ is a pair $(\cmG,\cg)$ consisting of a dilation-invariant subspace $\cmG \subseteq \mG \times \bR$ containing $\iota(\mG)$ and a pseudo-Riemannian metric $\cg$ of signature $(p+1,q+1)$ satisfying $\delta_\lambda^\ast \cg = \lambda^2 \cg$ and $\iota^\ast \cg = \boldsymbol{g}$.

An \defn{ambient space} for $(M^n,\kc)$ is a pre-ambient space $(\cmG,\cg)$ for $(M^n,\kc)$ which is formally Ricci flat;
i.e.
\begin{itemize}
 \item if $n$ is odd, then $\Ric(\cg) \in O(\rho^\infty)$;
 \item if $n$ is even, then $\Ric(\cg) \in O^+(\rho^{n/2-1})$.
\end{itemize}
Here $O^+(\rho^m)$ is the set of sections $S$ of $S^2T^\ast\cmG$ such that
\begin{enumerate}
 \item $\rho^{-m}S$ extends continuously to $\iota(\mG)$, and
 \item for each $z = (x,g_x) \in \mG$, there is an $s \in S^2 T_{x}^\ast M$ such that $\tr_{g_x}s=0$ and $(\iota^\ast(\rho^{-m}S)(z) = (\pi^\ast s)(z)$.
\end{enumerate}
Two ambient spaces $(\cmG_j,\cg_j)$, $j \in \{ 1, 2 \}$, for $(M^n,\kc)$ are \defn{ambient-equivalent} if there are open sets $\mU_j \subseteq \cmG_j$ and a diffeomorphism $\Phi \colon \mU_1 \to \mU_2$ such that
\begin{enumerate}
 \item $\iota(\mG) \subseteq \mU_j$ for each $j \in \{ 1, 2 \}$;
 \item each $\mU_j$ is dilation-invariant and $\Phi$ commutes with dilations;
 \item $\Phi \circ \iota = \iota$;
 \item if $n$ is odd, then $\Phi^\ast\cg_2 - \cg_1 \in O(\rho^\infty)$;
 \item if $n$ is even, then $\Phi^\ast\cg_2 - \cg_1 \in O^+(\rho^{n/2})$.
\end{enumerate}
Fefferman and Graham showed~\cite{FeffermanGraham2012}*{Theorem~2.3} is that if $(M^n,\kc)$ is a conformal manifold, then there is a unique, up to ambient-equivalence, ambient space for $(M^n,\kc)$.
In fact, they proved a stronger statement~\cite{FeffermanGraham2012}*{Theorem~2.9(A)}:
Let $(M^n,\kc)$ be a conformal manifold and pick a representative $g \in \kc$.
Then there is an $\varepsilon>0$ and a one-parameter family $g_\rho$, $\rho \in (-\varepsilon,\varepsilon)$, of metrics on $M$ such that $g_0=g$ and
\begin{equation}
 \label{eqn:straight-and-normal}
 \begin{split}
  \cmG & := \mG \times (-\varepsilon,\varepsilon) , \\
  \cg & := 2\rho \, dt^2 + 2t \, dt \, d\rho + t^2 g_\rho 
 \end{split}
\end{equation}
defines an ambient space $(\cmG,\cg)$ for $(M^n,\kc)$.
We say that an ambient metric in the form above is \defn{straight and normal}.

Let $(\cmG,\cg)$ be the ambient space for $(M^n,\kc)$.
Denote by
\begin{equation*}
 \cmE[w] := \left\{ \cf \in C^\infty(\cmG) \suchthat \delta_\lambda^\ast \cf = \lambda^w\cf \right\} 
\end{equation*}
the space of homogeneous functions on $\cmG$ of weight $w \in \bR$.
Note that $\cf \in \cmE[w]$ if and only if $T\cf = w\cf$.
The space of \defn{conformal densities} of weight $w$ is
\begin{equation*}
 \mE[w] := \left\{ \iota^\ast \cf \in C^\infty(\mG) \suchthat \cf \in \cmE[w] \right\} .
\end{equation*}

Fix $n \in \bN$.
An ambient \defn{scalar Riemannian invariant} $\cI$ is an assignment to each ambient space $(\cmG^{n+2},\cg)$ of a linear combination $\cI_{\cg}$ of complete contractions of
\begin{equation}
 \label{eqn:tensors}
 \cnabla^{N_1}\cRm \otimes \dotsm \otimes \cnabla^{N_\ell}\cRm , 
\end{equation}
with $\ell \geq 2$, where $\cnabla$ and $\cRm$ are the Levi-Civita connection and Riemann curvature tensor, respectively, of $\cg$, we regard $\cRm$ as a section of $\otimes^4T^\ast\cmG$, and we use $\cg^{-1}$ to take contractions.
Any complete contraction of~\eqref{eqn:tensors} is homogeneous of weight
\begin{equation*}
 w = -2\ell - \sum_{i=1}^\ell N_i .
\end{equation*}
We assume $\ell \geq 2$ because any complete contraction of $\cnabla^N\cRm$ is proportional to $\cDelta^{N/2}\cR$ modulo ambient scalar Riemannian invariants, and $\cDelta^{N/2}\cR=0$ when it is independent of the ambiguity of $\cg$.
If $\cI$ is independent of the ambiguity of $\cg$, then $\mI := \iota^\ast \cI_{\cg} \in \mE[w]$ is independent of the choice of ambient space.
A \defn{scalar Weyl invariant} is a scalar invariant $\mI \in \mE[w]$ constructed in this way.
Fefferman and Graham gave a condition on the weight $w$ which implies this independence:

\begin{lemma}[\cite{FeffermanGraham2012}*{Proposition~9.1}]
 \label{fefferman-graham-lemma}
 Let $(\cmG^{n+2},\cg)$ be a straight and normal ambient space and let $\cI \in \cmE[w]$ be an ambient scalar Riemannian invariant.
 If $w \geq -n - 2$, then $\iota^\ast\cI_{\cg}$ is independent of the ambiguity of $\cg$.
\end{lemma}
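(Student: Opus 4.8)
The plan is to prove the stronger, gauge-fixed statement: reduce to a straight and normal metric $\cg = 2\rho\,dt^2 + 2t\,dt\,d\rho + t^2 g_\rho$, so that the diffeomorphism freedom in the ambient-equivalence is used up and the entire remaining ambiguity is the $O^+(\rho^{n/2})$ freedom in $g_\rho$; concretely, $\partial_\rho^k g_\rho|_{\rho=0}$ is determined by $\kc$ for $k < n/2$, and only the trace-free part of $\partial_\rho^{n/2} g_\rho|_{\rho=0}$ is ambiguous. Since $\cI_{\cg}\in\cmE[w]$ we may write $\cI_{\cg} = t^w F(x,\rho)$, and $\iota^\ast\cI_{\cg} = t^w F(x,0)$. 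It therefore suffices to show that $F(x,0)$ depends only on the $\rho$-jet of $g_\rho$ at $\rho=0$ of order strictly less than $n/2$, together with the $g_0$-trace of $\partial_\rho^{n/2} g_\rho|_{\rho=0}$; all of these are unaffected by the ambiguity.

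First I would record two structural identities for the straight and normal metric that make the bookkeeping possible. The dilation field $T = t\partial_t$ satisfies $\cnabla T = \Id$, and hence $T \contr \cRm = 0$ in every slot; since $T$ has only a $t$-component, this says that every component of $\cRm$ carrying a $t$-index vanishes, and more generally that a $t$-index on $\cnabla^N\cRm$ can be traded for lower-order curvature. Second, computing $\cg^{-1}$ in the coordinates $(t,x^i,\rho)$, the only way to contract a lower $\rho$-index without a compensating $t$-index is through $\cg^{\rho\rho} = -2\rho/t^2$, which carries an explicit factor of $\rho$ and therefore vanishes on $\iota(\mG)$.

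The heart of the argument is then a weight count. Assigning to each tensor component the $t$-homogeneity forced by $\cI_{\cg}\in\cmE[w]$, the $\rho$-derivatives of $g_\rho$ enter $F$ only through curvature components of the form $\cnabla_\rho^{\,k-2}\cRm_{i\rho j\rho}$ and their index permutations, which produce $k$ lower $\rho$-indices that must all be contracted away in the complete contraction defining $\cI$. By the two identities above, contracting these $\rho$-indices while surviving on $\iota(\mG)$ forces the use of $\cg^{\rho t}$, hence the appearance of $t$-indices on the remaining curvature factors, which are severely constrained by $T\contr\cRm = 0$; balancing this against the total weight $w$ shows that a term surviving at $\rho = 0$ and involving $\partial_\rho^k g_\rho$ with $k \geq n/2$ is impossible once $w \geq -n-2$, with the single borderline configuration at $k = n/2$ contracting $\partial_\rho^{n/2} g_\rho$ only against $g_\rho^{-1}$, i.e.\ detecting only its $g_0$-trace. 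Here the hypothesis $\ell \geq 2$ is what upgrades the naive bound $w \geq -n$ to $w \geq -n-2$, since at least one curvature factor besides the one carrying the $\rho$-derivatives must absorb two of the $-w$ derivatives.

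I expect the main obstacle to be precisely this boundary count at $w = -n-2$: carefully tracking the $t$-homogeneity of the individual components of $\cnabla^N\cRm$ and of each admissible contraction, and confirming that the unique surviving top-order term pairs $\partial_\rho^{n/2} g_\rho$ only with the inverse metric, so that the trace-free—hence ambiguous—part of the $O^+(\rho^{n/2})$ freedom drops out. Working in the straight and normal gauge from the outset is what makes this feasible, both because the identities $\cnabla T = \Id$ and $T\contr\cRm = 0$ hold there exactly and because it sidesteps any variational path through non-Ricci-flat metrics, along which those identities would degrade by precisely the $O(\rho^{n/2-1})$ errors that the count cannot afford.
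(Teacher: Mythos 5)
The paper does not actually prove this lemma: it is imported verbatim from Fefferman--Graham~\cite{FeffermanGraham2012}*{Proposition~9.1}, so there is no in-paper proof to compare against. Your sketch is a reconstruction of the Fefferman--Graham argument, and it is the same strength-counting machinery the paper itself deploys for \cref{f-invariant} and \cref{dependence}. Your structural inputs are correct: in the straight and normal gauge $\cnabla T = \cg$ forces $T \contr \cRm = 0$, the only nonvanishing entries of $\cg^{-1}$ at $\rho=0$ are $\cg^{0\infty}$ and $\cg^{ij}$ (in the paper's language, $\cg^{ab}|_{\rho=0} \neq 0$ only if $\lV ab \rV = 2$), and the reduction to a fixed straight and normal gauge is legitimate by naturality of $\cI$ together with $\Phi \circ \iota = \iota$. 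Your count is also the right one, and is cleanest when phrased as in the proof of \cref{dependence}: the sum of the strengths of the $\ell$ factors equals the total number of indices, namely $\sum N_i + 4\ell = -w + 2\ell$, and since each factor of $\cnabla^{N}\cRm$ has strength at least $4$, any single factor has strength at most $-w - 2(\ell - 2) \leq -w \leq n+2$. This is exactly where $\ell \geq 2$ buys the improvement from $-n$ to $-n-2$, as you say.

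The gap is the one you yourself flag: everything of strength at most $n+1$ is handled by \cite{FeffermanGraham2012}*{Proposition~6.2}, but the borderline strength-$(n+2)$ configuration (which occurs only when $w = -n-2$, $\ell = 2$, and the other factor has strength exactly $4$) is precisely the content of Proposition~9.1, and your proposal asserts rather than proves that in this configuration the component $\cnabla^{\,n/2-2}_{\infty\cdots\infty}\cRm_{i\infty j\infty} \sim \partial_\rho^{n/2}g_{ij}$ can only have its free indices $i,j$ contracted against $\cg^{ij}$, so that only the ($g_0$-trace of the) ambiguity enters and hence drops out by trace-freeness. That the indices $i,j$ cannot instead contract against indices of the strength-$4$ factor requires an explicit (if short) enumeration of the nonvanishing strength-$4$ components of $\cnabla^{N}\cRm$, which is the step your sketch does not carry out. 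The mechanism you propose for closing it is the correct one, but as written the argument is a plan for the Fefferman--Graham proof rather than a self-contained replacement for the citation.
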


Bailey, Eastwood, and Graham~\cite{BaileyEastwoodGraham1994}*{Theorem~A} showed that every conformally invariant scalar of weight $w>-n$ is a Weyl invariant.

\subsection{Conformally invariant polydifferential operators}
\label{subsec:ovsienko-redou}

Fix $k,n \in \bN$.
An ambient \defn{polydifferential operator} $\cD$ of weight $-2k$ is an assignment to each ambient space $(\cmG^{n+2},\cg)$ of a linear map
\begin{equation*}
 \cD^{\cg} \colon \cmE[w_1] \otimes \dotsm \otimes \cmE[w_j] \to \cmE[w_1 + \dotsm + w_j - 2k]
\end{equation*}
such that $\cD^{\cg}( \cu_1 \otimes \dotsm \otimes \cu_j)$ is a linear combination of complete contractions of
\begin{equation}
 \label{eqn:polydifferential-precontraction}
 \cnabla^{N_1}\cu_1 \otimes \dotsm \otimes \cnabla^{N_j}\cu_j \otimes \cnabla^{N_{j+1}}\cRm \otimes \dotsm \otimes \cnabla^{N_\ell}\cRm
\end{equation}
with $\ell=j$ or $\ell \geq j+2$.
Necessarily the powers $N_1,\dotsc,N_\ell$ satisfy
\begin{equation*}
 \sum_{i=1}^\ell N_i + 2\ell - 2j = 2k.
\end{equation*}
The \defn{total order} of such a contraction is $\sum_{i=1}^j N_i$.
We say that $\cD$ is \defn{tangential} if $\iota^\ast(\cD^{\cg}(\cu_1 \otimes \dotsm \otimes \cu_j))$ depends only on $\iota^\ast\cu_1,\dotsc,\iota^\ast\cu_j$ and $\cg$ modulo its ambiguity.
On each conformal manifold $(M^n,\kc)$, such an operator determines a \defn{conformally invariant polydifferential operator} 
\begin{equation*}
 D \colon \mE[w_1] \otimes \dotsm \otimes \mE[w_j] \to \mE[w_1 + \dotsm + w_j - 2k].
\end{equation*}

We will give a condition on the total order of an ambient polydifferential operator which implies that it is independent of the ambiguity of $\cg$.
Let $(\cmG,\cg)$ be a straight and normal ambient space for $(M^n,g)$.
Following Fefferman and Graham~\cite{FeffermanGraham2012}, given coordinates $\{ x^i \}_{i=1}^n$ on $M$ and a multi-index $A = (a_1,\dotsc,a_r) \in \{ 0, 1, \dotsc, n, \infty \}^r$, we denote by $\cT_A = \cT_{a_1 \dotsm a_r}$ a component of a tensor $\cT \in \otimes^kT^\ast\cmG$, where the index $0$ represents $\partial_t$, an index $i \in \{ 1, \dotsc, n \}$ represents $\partial_{x^i}$, and the index $\infty$ represents $\partial_\rho$.
The \defn{length} of $A$ is $\lv A \rv := r$ and the \defn{strength} of $A$ is
\begin{equation*}
 \lV A \rV := \left|\left\{ j \suchthat a_j \in \{ 1, \dotsc, n \} \right\}\right| + 2\left|\left\{ j \suchthat a_j = \infty \right\}\right| .
\end{equation*}
When $\cf \in C^\infty(\cmG)$, we denote $\cf_{a_1 \dotsm a_r} := \cnabla_{a_r} \dotsm \cnabla_{a_1} \cf =: (\cnabla^r\cf)_{a_r \dotsm a_1}$.

We first determine the dependence of the covariant derivatives $\cnabla^r\cf$ of a function $\cf \in C^\infty(\cmG)$ on the ambiguity of $\cg$ (cf.\ \cite{FeffermanGraham2012}*{Proof of Proposition~6.2}).

\begin{lemma}
 \label{f-invariant}
 Let $(\cmG^{n+2},\cg)$ be a straight and normal ambient space and let $\cf \in \cmE[w]$.
 Let $A$ be a multi-index of length $r := \lv A \rv$ and strength $s := \lV A \rV$.
 Then $\cf_A$ mod $O(\rho^{(n-s)/2})$ depends only on $\cg$ mod $O(\rho^{n/2})$.
\end{lemma}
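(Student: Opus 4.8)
The plan is to work throughout in the straight and normal coordinates of~\eqref{eqn:straight-and-normal}, in which the ambiguity of the ambient metric is carried entirely by the family $g_\rho$. Since two straight and normal ambient metrics for $(M^n,\kc)$ agree modulo $O(\rho^{n/2})$ exactly when their families $g_\rho$ do, it suffices to fix two such metrics $\cg,\cg'$ with $g_\rho - g'_\rho \in O(\rho^{n/2})$ and to show that the corresponding components satisfy $\cf'_A - \cf_A \in O(\rho^{(n-s)/2})$, where primes denote objects formed from $\cg'$. Write $\lV a\rV$ for the strength of a single index, so that $\lV 0\rV=0$, $\lV i\rV=1$ for $1\le i\le n$, $\lV\infty\rV=2$, and $\lV A\rV=\sum_j\lV a_j\rV$. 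I will use repeatedly that $g_\rho - g'_\rho \in O(\rho^{n/2})$ forces $\partial_\rho(g_\rho)-\partial_\rho(g'_\rho)\in O(\rho^{n/2-1})$, and that every $\cf_B$ is smooth up to $\iota(\mG)$ and hence lies in $O(1)$.

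First I would compute the Christoffel symbols $\cGamma_{ab}^{c}$ of $\cg$ in these coordinates. A direct calculation gives, for the symbols carrying the metric ambiguity,
\[
 \cGamma_{ij}^{0} = -\tfrac{t}{2}\,\partial_\rho(g_\rho)_{ij}, \qquad \cGamma_{ij}^{\infty} = -(g_\rho)_{ij} + \rho\,\partial_\rho(g_\rho)_{ij}, \qquad \cGamma_{i\infty}^{k} = \tfrac12 (g_\rho)^{kl}\partial_\rho(g_\rho)_{il},
\]
together with $\cGamma_{ij}^{k}$ equal to the Christoffel symbols of $g_\rho$ on $M$, while $\cGamma_{0i}^{k}=t^{-1}\delta_i^{k}$ and $\cGamma_{0\infty}^{\infty}=t^{-1}$ are independent of the ambiguity (all remaining symbols vanish). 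From this list I would record two structural facts: (A) every nonzero symbol satisfies the subadditivity $\lV c\rV \le \lV a\rV + \lV b\rV$; and (B) each symbol lies in $O(1)$ and its ambiguity obeys $(\cGamma')_{ab}^{c} - \cGamma_{ab}^{c} \in O\bigl(\rho^{(n-\lV a\rV-\lV b\rV)/2}\bigr)$. Fact (B) follows from $g_\rho - g'_\rho \in O(\rho^{n/2})$ and $\partial_\rho(g_\rho)-\partial_\rho(g'_\rho)\in O(\rho^{n/2-1})$; the borderline case is $\cGamma_{ij}^{0}$, whose lower-index strength is exactly $2$ and whose ambiguity is precisely $O(\rho^{n/2-1})$, while the symbols carrying $(g_\rho)_{ij}$ without a $\rho$-derivative over-satisfy the bound.

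I would then argue by induction on $r=\lv A\rv$, the cases $r\le 1$ being trivial since $\cf$ and $d\cf$ are metric-independent. For the inductive step, write $A'=(a_1,\dots,a_{r-1})$ and use
\[
 \cf_{a_1\cdots a_r} = \partial_{a_r}\cf_{a_1\cdots a_{r-1}} - \sum_{m=1}^{r-1}\cGamma_{a_r a_m}^{c}\,\cf_{a_1\cdots c\cdots a_{r-1}},
\]
where in the $m$-th summand the index $c$ occupies the $m$-th slot. Taking the difference of this identity for $\cg'$ and $\cg$, the leading term contributes $\partial_{a_r}(\cf'_{A'}-\cf_{A'})$, and the inductive hypothesis gives $\cf'_{A'}-\cf_{A'}\in O(\rho^{(n-\lV A'\rV)/2})$; the three cases $a_r\in\{0,i,\infty\}$ lower the $\rho$-order by $0,0,1$ while the strength $s=\lV A'\rV+\lV a_r\rV$ rises by $0,1,2$, so in every case this term lies in $O(\rho^{(n-s)/2})$. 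Each Christoffel summand contributes, by the product rule, a term $\bigl((\cGamma')_{a_r a_m}^{c}-\cGamma_{a_r a_m}^{c}\bigr)\cf_{\dots}$ and a term $(\cGamma')_{a_r a_m}^{c}(\cf'_{\dots}-\cf_{\dots})$. The substitution $a_m\mapsto c$ changes the strength of the $(r-1)$-index tensor to $\lV A'\rV-\lV a_m\rV+\lV c\rV$; using $\lV A'\rV\ge\lV a_m\rV$ together with (A) and (B), both terms are $O(\rho^{(n-s)/2})$: the first because $\lV a_r\rV+\lV a_m\rV\le s$, and the second because $\lV c\rV-\lV a_m\rV\le\lV a_r\rV$ makes the tensor strength at most $s$ while the factor $(\cGamma')_{a_r a_m}^{c}$ lies in $O(1)$. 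This closes the induction.

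The delicate point is the exact calibration of the strength, in which a $\rho$-derivative counts twice as much as an $M$-derivative. This is precisely what makes the two mechanisms balance: differentiating in $\rho$ lowers the attainable $\rho$-order by one but is charged $2$ to the strength, while the ambiguity of a Christoffel symbol containing one $\rho$-derivative of $g_\rho$ is suppressed by one power of $\rho$ and is charged the extra contribution recorded in (B). I expect the main obstacle to be establishing (B) uniformly over all nonzero symbols and confirming, in the product-rule step, that the strength increment $\lV c\rV-\lV a_m\rV$ from the index substitution never exceeds $\lV a_r\rV$; both reduce to the explicit list above, but must be checked symbol by symbol rather than deduced from a single inequality.
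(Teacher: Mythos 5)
Your proposal is correct and takes essentially the same route as the paper: an induction on the length of the multi-index using the coordinate expression $\cf_{Aa} = \partial_a\cf_A - \sum_m \cGamma_{a a_m}^{b}\cf_{\dotsc b \dotsc}$ together with a strength count, with the critical ``worst case'' (substituting a $0$-index by an $\infty$-index) excluded by the vanishing of $\cGamma_{a0}^\infty$ for $a\neq\infty$. The only differences are presentational: you compute the Christoffel symbols of the straight and normal metric explicitly and package their behavior into your facts (A) and (B), where the paper instead invokes the homogeneity identity $\cnabla_T\cnabla^r\cf=(w-r)\cnabla^r\cf$ to dispatch the $0$-index case and cites Fefferman--Graham for the Christoffel symbols.
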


\begin{proof}
 The proof is by induction in $r$.
 Since $\cf \in \cmE[w]$, we see that $\mL_T\cnabla^r\cf = w\cnabla^r\cf$.
 Combining this with the identity $\cnabla T = \cg$ yields
 \begin{equation}
  \label{eqn:remove-T-from-f}
  \cnabla_T\cnabla^r\cf = (w-r)\cnabla^r\cf .
 \end{equation}
 In particular, it suffices to consider the case $(n-s)/2 \leq n/2-1$:
 If $A$ is a multi-index of strength $s \leq 1$, then Equation~\eqref{eqn:remove-T-from-f} implies that $\cf_A$ is proportional to $\cf$ (if $s=0$) or $\cf_a$ for some $a \in \{ 1, \dotsc, n \}$ (if $s=1$).
 In either case $\cf_A$ is independent of $\cg$.
 
 We now proceed with the induction.
 The claim is trivially true when $r=0$.
 For the inductive step, consider $\cf_{Aa}$ for $A=(a_1,\dotsc,a_r)$ a multi-index of length $r$ and $Aa := (a_1, \dotsc, a_r, a)$ the corresponding multi-index of length $r+1$.
 Denote $s := \lV Aa \rV$ and $s' := \lV A \rV$.
 
 Suppose first that $a=0$.
 Then Equation~\eqref{eqn:remove-T-from-f} yields $\cf_{Aa} = (w-r)\cf_A$.
 The required dependence of $\cf_{Aa}$ then follows from the inductive hypothesis.
 
 Suppose next that $a \not= 0$.
 Write
 \begin{equation*}
  \cf_{Aa} = \partial_a\cf_A - \cGamma_{aa_1}^b\cf_{ba_2\dotsm a_r} - \dotsm - \cGamma_{aa_r}^b\cf_{a_1\dotsm a_{r-1}b} .
 \end{equation*}
 If $a \not= \infty$, then $s = s'+1$ and hence $\partial_{x^i}O(\rho^{(n-s')/2}) \subseteq O(\rho^{(n-s)/2})$.
 If instead $a = \infty$, then $s = s'+2$ and hence $\partial_\rho O(\rho^{(n-s')/2}) = O(\rho^{(n-s)/2})$.
 In either case, the inductive hypothesis then gives the required dependence of $\partial_a\cf_A$.
 We now consider the summand $\cGamma_{aa_r}^b\cf_{a_1\dotsm a_{r-1}b}$;
 the remaining summands are similar.
 Since $(n-s)/2 \leq n/2-1$, the formula~\cite{FeffermanGraham2012}*{Equation~(3.16)} for the Christoffel symbols of $\cg$ imply that $\cGamma_{IJ}^K$ mod $O(\rho^{(n-s)/2})$ depends only on $\cg$ mod $O(\rho^{n/2})$, and so we can ignore the ambiguity caused by the Christoffel symbols.
 The worst case for the ambiguity caused by the inductive hypothesis is when $b=\infty$ and $a_r=0$, so that $\lV (a_1, \dotsc, a_{r-1}, b) \rV = s'+2$.
 If $a = \infty$, then $s'+2=s$, and so the required dependence follows from the inductive hypothesis.
 If $a \not= \infty$, then the vanishing of the Christoffel symbol $\Gamma_{a0}^\infty$ implies that the worst case does not happen.
 Thus $\lV (a_1, \dotsc, a_{r-1}, b) \rV \leq s'+1 = s$, and so again the required dependence follows from the inductive hypothesis.
\end{proof}

We now give a condition on the total order of an ambient polydifferential operator that implies that it is independent of the ambiguity of $\cg$ (cf.\ \cite{FeffermanGraham2012}*{Proposition~9.1}).

\begin{corollary}
 \label{dependence}
 Let $(\cmG^{n+2},\cg)$ be a straight and normal ambient space and let $\cD$ be an ambient polydifferential operator of weight $-2k$.
 Suppose that
 \begin{enumerate}
  \item $n$ is odd,
  \item $k \leq n/2$, or
  \item $k \leq n/2 + 1$ and $\cD$ can be expressed as a linear combination of complete contractions of tensors of the form~\eqref{eqn:polydifferential-precontraction} with $\ell \geq j + 2$.
 \end{enumerate}
 Then $\cD$ is independent of the ambiguity of $\cg$.
\end{corollary}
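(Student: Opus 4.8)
The plan is to reduce the statement to an order-of-vanishing estimate in $\rho$ for a single complete contraction of the form \eqref{eqn:polydifferential-precontraction}, and then to read off the three cases from a strength count. Since $\cD$ is independent of the ambiguity precisely when $\iota^\ast(\cD^{\cg}(\cu_1 \otimes \dotsm \otimes \cu_j))$ is unchanged upon replacing $\cg$ by another straight and normal ambient metric $\cg'$, and since two such metrics for the same $(M,g)$ differ by $\delta\cg := \cg' - \cg \in O^+(\rho^{n/2})$ (for $n$ even; by \eqref{eqn:straight-and-normal}, carried entirely by the block $\delta\cg_{ij} = t^2 \cdot O^+(\rho^{n/2})$), it suffices to show that the induced difference $\delta C$ of each complete contraction $C$ occurring in $\cD^{\cg}$ satisfies $\iota^\ast(\delta C) = 0$. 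When $n$ is odd this is immediate, since ambient-equivalence forces $\delta\cg \in O(\rho^\infty)$ and every finite-order covariant expression then varies by $O(\rho^\infty)$, which dies on $\iota(\mG)$; this settles case (i), and hereafter I take $n$ even.

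Next I would set up the strength bookkeeping in the frame $\{\partial_t, \partial_{x^i}, \partial_\rho\}$. A direct computation shows that the inverse metric is block-diagonal with $\cg^{00}=0$, $\cg^{0\infty}=t^{-1}$, $\cg^{\infty\infty}=-2\rho\,t^{-2}$ all exact (hence unambiguous) and with $\cg^{ij}=t^{-2}(g_\rho)^{ij}$ the only ambiguous component, of order $O(\rho^{n/2})$. Because $\cg^{\infty\infty}\in O(\rho)$, on $\iota(\mG)$ only the strength-two contractions $\cg^{0\infty}$ and $\cg^{ij}$ survive, so every surviving term consumes total strength exactly $2P$, where $P = k + \ell - j$ is the number of inverse metrics forced by the weight $-2k$ and the homogeneities of the factors. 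The difference $\delta C$ is then a sum of terms in which $\delta\cg$ is felt by one factor at a time: inverse-metric contributions are $O(\rho^{n/2})$ and harmless; for a function factor $\cnabla^{N_i}\cu_i$ whose contracted indices have strength $s_i$, \cref{f-invariant} gives a contribution of order $O(\rho^{(n-s_i)/2})$; and for a curvature factor $\cnabla^{N_i}\cRm$ of index-strength $\sigma_i$ I would establish, by the same induction as in \cref{f-invariant}, the estimate $O(\rho^{(n-\sigma_i)/2+1})$, the extra $+1$ reflecting that $\cRm$ sees $\delta\cg$ only through two derivatives of a quantity of order $\rho^{n/2}$. Distributing the budget $\sum_i s_i + \sum_i \sigma_i = 2P$ and invoking $k \leq n/2$ in case (ii), or $k \leq n/2+1$ together with the presence of at least two curvature factors in case (iii), one finds that every summand of $\delta C$ is of positive order in $\rho$ and so vanishes on $\iota(\mG)$.

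The hard part will be the borderline configurations, where one factor carries almost all of the strength and the crude bounds give only $O(\rho^0)$. For function factors the worst case is a factor all of whose indices are $\partial_\rho$- or $\partial_t$-directions; here I would use the exact identities underlying \cref{f-invariant}, namely $\cGamma_{\infty\infty}^A = 0$ and $\cnabla_T\cnabla^r\cf = (w-r)\cnabla^r\cf$ from \eqref{eqn:remove-T-from-f}, to see that such extremal components are literally $\partial_\rho$- and $t^{-1}$-multiples of lower covariant derivatives, hence carry no ambiguity at all, so the naive saturation never actually occurs. The genuinely delicate point is the curvature estimate at weight exactly $-n-2$: the component $\cRm_{\infty i\infty j}$ has ambiguity of order $\rho^{n/2-2}$, which does not vanish on $\iota(\mG)$ when $n=4$, and this is exactly why case (iii) demands $\ell \geq j+2$. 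I expect to treat this as in \cref{fefferman-graham-lemma} (\cite{FeffermanGraham2012}*{Proposition~9.1}): the leading ambiguity $\iota^\ast(\rho^{-n/2}\delta\cg)$ is trace-free and pulled back from $M$, so in a complete contraction the extremal curvature indices must meet a second curvature factor (using $\ell \geq j+2$), and the trace-free condition kills the offending contraction. Summing these estimates over all terms of $\delta C$ then gives $\iota^\ast(\delta C)=0$ in each of the three cases.
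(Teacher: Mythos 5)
Your overall strategy is the paper's: count the strengths $S_1,\dotsc,S_\ell$ of the factors in a surviving complete contraction, use that only strength-two pairs ($\cg^{0\infty}$ and $\cg^{ij}$) contribute at $\rho=0$ so that $\sum S_i = 2(k+\ell-j)$, and bound the ambiguity of each factor by its strength --- \cref{f-invariant} for the function factors, and for a curvature factor of strength $\sigma$ an estimate equivalent to $O(\rho^{(n-\sigma)/2+1})$, which the paper obtains by citing \cite{FeffermanGraham2012}*{Propositions~6.2 and~9.1} rather than reproving. The generic configurations are therefore fine. The problem is the borderline configurations, which you correctly identify as the hard part but then do not dispose of correctly; this is a genuine gap, since under hypothesis (ii) with $k=n/2$ the extremal cases actually occur and carry all the content of the statement.

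Concretely: when $k=n/2$ and $\ell=j$, a function factor can carry the full strength $S_{i_0}=n$, where \cref{f-invariant} gives only $O(\rho^0)$, i.e.\ no information. You claim this saturation ``never actually occurs'' because the worst components have all indices in the $\partial_t,\partial_\rho$ directions and reduce to lower derivatives via $\cGamma^A_{\infty\infty}=0$ and \eqref{eqn:remove-T-from-f}. But the configuration forced by the strength count is a \emph{complete self-contraction} of $\cnabla^{n}\cu_{i_0}$ (all other factors having strength zero), and the surviving pairs at $\rho=0$ are $(0,\infty)$ pairs \emph{and spatial-spatial pairs}; the latter do not reduce by those identities, and components of strength $n$ are exactly where the inductive estimate is vacuous. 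The saturation genuinely occurs: modulo terms with curvature factors, the extremal monomial is $(\prod_{i\neq i_0}\cu_i)\,\cDelta^{n/2}\cu_{i_0}$, and its independence of the ambiguity is a separate nontrivial input --- the tangentiality of the critical power of the ambient Laplacian observed in \cite{GJMS1992}*{Section~3} --- which your argument never invokes. Relatedly, you tie the strength-$(n+2)$ curvature borderline exclusively to case (iii) and resolve it by having the extremal indices ``meet a second curvature factor,'' but this borderline also arises under hypothesis (ii) with $k=n/2$ and $\ell=j+1$, i.e.\ with a \emph{single} curvature factor whose indices are entirely self-contracted; there the trace-free leading ambiguity must be killed by contraction against $\cg^{-1}$ within the same factor together with the formal Ricci-flatness, which is the content of \cite{FeffermanGraham2012}*{Propositions~6.2 and~9.1}, not of the $\ell\geq j+2$ hypothesis. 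Without these two inputs the proof does not close.
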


\begin{proof}
 The conclusion is clear when $n$ is odd.
 Assume that $n$ is even.
 
 Since the component $\cg^{ab}$ at $\rho=0$ of the inverse ambient metric is nonzero only if $\lV ab \rV = 2$, we see that a complete contraction of a tensor of the form~\eqref{eqn:polydifferential-precontraction} can only be nonzero on $\mG$ if each contracted pair has strength two.
 Denote by $S_1,\dotsc,S_\ell$ the strengths of the factors in a contributing monomial.
 Note that the sum of the strengths must equal the total number of indices in a given monomial.
 Therefore
 \begin{equation}
  \label{eqn:sum-strengths}
  \sum_{i=1}^\ell S_i = \sum_{i=1}^\ell N_i + 4(\ell-j) = 2(k + \ell-j)  .
 \end{equation}
 Clearly $S_i \geq 0$ if $i \leq j$.
 Fefferman and Graham observed~\cite{FeffermanGraham2012}*{Proposition~6.1} that $S_i \geq 4$ if $i \geq j+1$.
 
 Suppose first that $k \leq n/2$.
 If $i_0 \in \{ 1, \dotsc, j \}$, then Equation~\eqref{eqn:sum-strengths} yields
 \begin{equation*}
  S_{i_0} \leq \sum_{i=1}^\ell S_i - 4(\ell-j) = 2(k-\ell+j) \leq 2k \leq n .
 \end{equation*}
 Moreover, equality holds if and only if $k=n/2$ and $\ell=j$ and $S_i=0$ for each $i \not= i_0$.
 If equality holds, then our monomial is proportional to $(\prod_{i\not=i_0}\cu_i)\cDelta^{n/2}\cu_{i_0}$ modulo complete contractions of tensors of the form~\eqref{eqn:polydifferential-precontraction} with $\ell \geq j+1$;
 these will be considered next.
 Graham et al.\ observed~\cite{GJMS1992}*{Section~3} that $(\prod_{i\not=i_0}\cu_i)\cDelta^{n/2}\cu_{i_0}$ is independent of the ambiguity of $\cg$.
 Otherwise \cref{f-invariant} implies that the contribution of the factor $\cnabla^{M_{i_0}}\cu_{i_0}$ is independent of the ambiguity of $\cg$.
 If instead $i_0 \in \{ j+1, \dotsc, \ell\}$, then $\ell \geq j+1$ and hence Equation~\eqref{eqn:sum-strengths} yields
 \begin{equation*}
  S_{i_0} \leq \sum_{i=1}^\ell S_i - 4(\ell-j-1) = 2(k+2-\ell+j) \leq n+2 .
 \end{equation*}
 Moreover, equality holds if and only if $k=n/2$ and $\ell = j+1$ and $S_i = 0$ for  $i \not= i_0$.
 If equality holds, then our monomial is proportional to $\cu_0 \dotsm \cu_j \cI$ for $\cI$ a complete contraction of $\cnabla^{n/2}\cRm$.
 Otherwise $S_{i_0} \leq n+1$.
 In either case, Fefferman and Graham showed~\cite{FeffermanGraham2012}*{Propositions~6.2 and~9.1} that the contribution of the factor $\cnabla^{M_{i_0}}\cRm$ is independent of the ambiguity of $\cg$.
 We conclude that our monomial is independent of the ambiguity of $\cg$.
 
 Suppose now that $\ell \geq j+2$ and $k \leq n/2 + 1$.
 If $i_0 \in \{ 1, \dotsc, j \}$, then Equation~\eqref{eqn:sum-strengths} yields
 \begin{equation*}
  S_{i_0} \leq \sum_{i=1}^\ell S_i - 4(\ell-j) \leq 2(k-\ell+j) \leq 2k-4 \leq n-2 .
 \end{equation*}
 \Cref{f-invariant} then implies that the contribution of the factor $\cnabla^{M_{i_0}}\cu_{i_0}$ is independent of the ambiguity of $\cg$.
 If instead $i_0 \in \{ j+1, \dotsc, \ell \}$, then Equation~\eqref{eqn:sum-strengths} yields
 \begin{equation*}
  S_{i_0} \leq \sum_{i=1}^\ell S_i - 4(\ell-j-1) \leq 2(k+2-\ell+j) \leq 2k \leq n + 2 .
 \end{equation*}
 Moreover, equality holds if and only if $S_i = 0$ for each $i \leq j$ and $S_i=4$ for each $i \in \{ j+1, \dotsc, \ell \} \setminus \{ i_0 \}$.
 Fefferman and Graham showed~\cite{FeffermanGraham2012}*{Propositions~6.2 and~9.1} that the contribution of the factor $\cnabla^{M_{i_0}}\cRm$ is independent of the ambiguity of $\cg$.
 We again conclude that our monomial is independent of the ambiguity of $\cg$.
\end{proof}

Now let $D \colon \mE\bigl[-\frac{n-2k}{j+1}\bigr]^{\otimes j} \to \mE\bigl[-\frac{jn+2k}{j+1}\bigr]$ be a conformally invariant polydifferential operator.
Then for every compact conformal manifold $(M^n,\kc)$, the Dirichlet form $\kD \colon \mE\bigl[-\frac{n-2k}{j+1}\bigr]^{\otimes(j+1)} \to \bR$ determined by
\begin{equation*}
 \mathfrak{D}(u_0 \otimes \dotsm \otimes u_j) := \int_M u_0D(u_1 \otimes \dotsm \otimes u_j) \dvol ,
\end{equation*}
is conformally invariant.
We say that $D$ is \defn{formally self-adjoint} if $\kD$ is symmetric.
This implies that $D$ is itself symmetric.

We conclude this section by constructing the curved Ovsienko--Redou operators $D_{2k}$ and their linear analogues $D_{2k,\mI}$.
To that end, we identify a tangential linear combination of operators of the form $\cDelta^{k-j} \circ \cf \circ \cDelta^j$ for some fixed homogeneous function $\cf \in \cmE[w']$ (cf.\ \cite{CaseLinYuan2022or}*{Lemma~6.1}).

\begin{lemma}
 \label{construction-lemma}
 Let $(\cmG,\cg)$ be an ambient space for a conformal manifold $(M^n,\kc)$ and let $\cf \in \cmE[w']$.
 Let $k \in \bN$ and $w \in \bR$.
 Define $\cD_{2k,w,\cf} \colon \cmE[w] \to \cmE[w+w'-2k]$ by
 \begin{align*}
  \cD_{2k,w,\cf}(\cu) & := \sum_{j=0}^k a_j \cDelta^{k-j}\left( \cf \cDelta^j\cu \right) , \\
  a_{j} & := \binom{k}{j} \frac{\Gamma\bigl(j+k-w-w'-\frac{n}{2}\bigr)\Gamma\bigl(\frac{n}{2}+w-j\bigr)}{\Gamma\bigl(k-w-w'-\frac{n}{2}\bigr)\Gamma\bigl(\frac{n}{2}+w-k\bigr)} .
 \end{align*}
 Then $\iota^\ast\cD_{2k,w,\cf}(\cu)$ depends only on $\iota^\ast\cu$, $\cf$, and $\cg$.
\end{lemma}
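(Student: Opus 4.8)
The plan is to pass to the straight-and-normal gauge and to reformulate tangentiality in the $\cu$-slot as an algebraic condition on normal jets. Fixing $g \in \kc$ and representing the ambient metric in the form~\eqref{eqn:straight-and-normal}, I expand $\cu = t^w U(x,\rho) \in \cmE[w]$ as $U = \sum_{m\geq 0}U_m\rho^m$, so that $\iota^\ast\cu$ is determined by $U_0$; the claim is exactly that $\iota^\ast\cD_{2k,w,\cf}(\cu)$ is independent of $U_1, U_2, \dotsc$. The first step is to record the ambient Laplacian on homogeneous functions: with $\mu := \sqrt{\det g_\rho}$, $\mu' := \partial_\rho\mu$, and $\sqrt{\lv\det\cg\rv} = t^{n+1}\mu$, a direct computation gives $\cDelta(t^v P) = t^{v-2}\mathcal{L}_v P$ with
\[
 \mathcal{L}_v P = -2\rho\,\partial_\rho^2 P + \Bigl( n + 2v - 2 - 2\rho\tfrac{\mu'}{\mu}\Bigr)\partial_\rho P + v\tfrac{\mu'}{\mu}P + \Delta_{g_\rho}P .
\]
The key structural fact is the induced triangular action on normal jets, $(\mathcal{L}_v P)_m = 2(m+1)\bigl(\tfrac n2 + v - m - 1\bigr)P_{m+1} + R^{(v)}_m(P_0,\dotsc,P_m)$, in which the index-lowering part is purely indicial while the remainder $R^{(v)}_m$, carrying all the curvature of $g_\rho$, reads only the jets of order $\leq m$.

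Second, I would extract the principal (curvature-free) part of the coefficient of a fixed $U_m$, $m\geq 1$. Using the indicial terms alone, $\cDelta^j(t^w\rho^m)$ and then $\cDelta^{k-j}(\cf\,\cDelta^j\cu)$ become explicit products of indicial factors; the restriction $\iota^\ast$ enforces a selection rule pairing $U_m$ with the jet $F_{k-m}$ of $\cf$, and each summand is a product of two falling factorials and $(k-j)!$. Summing against $a_j$ and using the reflection formula $\Gamma(x)\Gamma(1-x) = \pi/\sin\pi x$ to pair the factor $\Gamma\bigl(j+k-w-w'-\tfrac n2\bigr)$ in $a_j$ with the factor $\Gamma\bigl(\tfrac n2 + w + w' - j - k + 1\bigr)$ produced by $\cDelta^{k-j}$, the entire $j$-dependence collapses to
\[
 \sum_{j=0}^m (-1)^j\binom mj P_m(j), \qquad P_m(j) := \frac{\Gamma\bigl(\tfrac n2 + w - j\bigr)}{\Gamma\bigl(\tfrac n2 + w - m - j + 1\bigr)} ,
\]
where $P_m$ is a polynomial in $j$ of degree $m-1$. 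Since the $m$-th finite difference annihilates every polynomial of degree $<m$, this vanishes for every $m \geq 1$; this is the conceptual core, and it is precisely what forces the Gamma-function coefficients $a_j$.

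The main obstacle is upgrading this to the full curved operator. The curvature terms $R^{(v)}_m$ and the remaining jets $F_p$ of $\cf$ produce further contributions to the coefficient of $U_m$ that must also cancel. I would argue by induction on the normal order $m$: granting independence of $U_1,\dotsc,U_{m-1}$, the coefficient of $U_m$ splits as the indicial sum above (which vanishes) plus corrections in which at least one factor of $\cDelta$ acts through $R^{(v)}_\bullet$ rather than the indicial term. A weight and normal-order bookkeeping should show that each correction is again governed by a finite-difference identity of the same shape at shifted parameters, and hence also vanishes. The delicate point is controlling how these curvature corrections and the $\rho$-expansion of $\cf$ redistribute the normal-jet indices while keeping the polynomial degree below the order of the difference; this is the analogue here of Fefferman and Graham's analysis of the metric ambiguity (cf.\ the proof of \cref{f-invariant} and \cite{FeffermanGraham2012}*{Propositions~6.2 and~9.1}).
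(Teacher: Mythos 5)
Your reduction of tangentiality to the vanishing of the coefficient of each higher normal jet $U_m$, $m\geq 1$, is a correct reformulation, and your indicial computation (the collapse to an $m$-th finite difference of a polynomial of degree $m-1$) is a sound verification of the \emph{flat part} of the claim. But the proof has a genuine gap exactly where you flag it: the curvature corrections. Your inductive step asserts that "a weight and normal-order bookkeeping \emph{should} show that each correction is again governed by a finite-difference identity of the same shape at shifted parameters," but this is not established, and it is not clear it is even true term-by-term. The remainder operators $R^{(v)}_m$ involve $\Delta_{g_\rho}$, $\mu'/\mu$, and arbitrarily many $\rho$-derivatives of $g_\rho$, and they can act at any of the $k$ stages of the composition $\cDelta^{k-j}\circ\cf\circ\cDelta^j$; the coefficient of $U_m$ in $\iota^\ast\cD_{2k,w,\cf}(\cu)$ is therefore a sum over all interleavings of indicial and remainder factors, and the required cancellation is a global one across these interleavings, not a family of independent finite-difference identities. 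Moreover, knowing that the operator is independent of $U_1,\dotsc,U_{m-1}$ does not by itself isolate the coefficient of $U_m$ into a tractable shape. As written, the hardest part of the argument is a plan rather than a proof.

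For comparison, the paper avoids all of this with an exact operator identity. Since the ambient metric is straight, $\cnabla T=\cg$, so with $Q:=\lv T\rv^2$ one has $[\cDelta,Q]=2(2T+n+2)$ with \emph{no} curvature remainder, whence $[\cDelta^{\ell},Q]=2\ell\cDelta^{\ell-1}(2T+n+4-2\ell)$. Writing
\begin{equation*}
 [\cD_{2k,w,\cf},Q]=\sum_{j=0}^{k-1}b_j\,\cDelta^{k-j-1}\circ\cf\circ\cDelta^{j}
 \quad\text{on }\cmE[w-2],
\end{equation*}
the choice of $a_j$ forces $b_j=0$, and tangentiality follows because any $\cu$ with $\iota^\ast\cu=0$ factors as $Q\cu'$ with $\cu'\in\cmE[w-2]$. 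This single first-order recursion in $j$ replaces your entire jet analysis and handles all curvature terms at once. If you want to salvage your approach, the cleanest route is to prove the commutator identity $[\cD_{2k,w,\cf},Q]=0$ rather than to chase the normal-jet expansion.
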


\begin{proof}
 Consider the defining function $Q := \lv T \rv^2$ for $\iota(\mG) \subseteq \cmG$.
 It suffices to show that the commutator $[\cD_{2k,w,\cf},Q]=0$ on $\cmE[w-2]$, where $Q$ is regarded as a multiplication operator.
 Graham et al.\ observed~\cite{GJMS1992}*{Equation~(1.8)} that
 \begin{equation*}
  [ \cDelta^\ell , Q ] = 2\ell\cDelta^{\ell-1}( 2T + n + 4 - 2\ell)
 \end{equation*}
 on $C^\infty(\cmG)$ for any $\ell \in \bN$.
 Therefore
 \begin{equation*}
  [ \cD_{2k,w,\cf} , Q] = \sum_{j=0}^{k-1} b_j \cDelta^{k-j-1} \circ \cf \circ \cDelta^j
 \end{equation*}
 on $\cmE[w-2]$, where
 \begin{equation*}
  b_j = 2(j+1)(n+2w-2j-2)a_{j+1} - 2(k-j)( 2j+2k-2w-2w'-n )a_{j} .
 \end{equation*}
 Our definition of $a_j$ yields $b_j=0$, and hence $[\cD_{2k,w,\cf},Q]=0$ on $\cmE[w-2]$.
\end{proof}

Note that $\cD_{k,w,\cf}$ is not a differential operator in the sense of Subsection~\ref{subsec:ambient} because we do not require that $\cf$ is an ambient scalar Riemannian invariant.
Applying \cref{construction-lemma} with a suitably chosen ambient scalar Riemannian invariant does produce a tangential different operator:

\begin{corollary}
 \label{linear}
 Let $(M^n,\kc)$ be a conformal manifold and let $\cI \in \cmE[-2\ell]$ be an ambient scalar Riemannian invariant.
 Let $k \in \bN$;
 if $n$ is even, then assume additionally that
 \begin{enumerate}
  \item $k \leq n/2$, if $\ell=0$; and
  \item $k + \ell \leq n/2+1$, if $\ell \geq 1$.
 \end{enumerate}
 Then
 \begin{equation*}
 \cD_{2k,\cI}(\cu) := \sum_{r+s=k} \frac{k!}{r!s!}\frac{(\ell+s-1)!(\ell+r-1)!}{(\ell-1)!^2} \cDelta^r\left( \cI \cDelta^s\cu \right)
 \end{equation*}
 defines a tangential differential operator $\cD_{2k,\cI} \colon \cmE\bigl[-\frac{n-2k-2\ell}{2}\bigr] \to \cmE\bigl[-\frac{n+2k+2\ell}{2}\bigr]$.
 In particular, the differential operator
 \begin{align*}
  D_{2k,\mI} & \colon \mE\left[ -\frac{n-2k-2\ell}{2} \right] \to \mE\left[ -\frac{n+2k+2\ell}{2} \right] , \\
  D_{2k,\mI}(\iota^\ast\cu) & := (\cD_{2k,\cI}\cu) \circ \iota ,
 \end{align*}
 is conformally invariant.
\end{corollary}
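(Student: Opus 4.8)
The plan is to recognize $\cD_{2k,\cI}$ as the operator $\cD_{2k,w,\cf}$ produced by \cref{construction-lemma} for the choice $\cf = \cI \in \cmE[-2\ell]$ and $w = -\frac{n-2k-2\ell}{2}$, and then to remove the dependence on the ambiguity of $\cg$ using \cref{dependence}. The statement then follows because, as recorded in \cref{subsec:ovsienko-redou}, a tangential operator descends to a conformally invariant operator on $M$.

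First I would substitute $w = -\frac{n}{2} + k + \ell$ and $w' = -2\ell$ into the Gamma coefficients of \cref{construction-lemma}. Writing $s = j$ and $r = k - j$, direct simplification gives the four arguments $\frac{n}{2} + w - j = r + \ell$, $\frac{n}{2} + w - k = \ell$, $j + k - w - w' - \frac{n}{2} = s + \ell$, and $k - w - w' - \frac{n}{2} = \ell$. For $\ell \geq 1$ none of these is a nonpositive integer, so every Gamma function collapses to a factorial and $a_j = \frac{k!}{r!s!}\frac{(\ell+s-1)!(\ell+r-1)!}{(\ell-1)!^2}$, exactly the coefficient in the statement. Thus $\cD_{2k,\cI} = \cD_{2k,w,\cI}$, and \cref{construction-lemma} shows that $\iota^\ast\cD_{2k,\cI}(\cu)$ depends only on $\iota^\ast\cu$, $\cI$, and $\cg$ --- in particular not on the extension of $\iota^\ast\cu$ off $\iota(\mG)$. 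For $\ell = 0$ the invariant $\cI$ is constant and $\cD_{2k,\cI}$ is a multiple of the GJMS operator of order $2k$, for which this property is classical.

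It remains to upgrade this to full tangentiality by establishing independence of the ambiguity of $\cg$. Since $\cI$ is an ambient scalar Riemannian invariant, it is a linear combination of complete contractions of $\cnabla^{N_1}\cRm \otimes \dotsm \otimes \cnabla^{N_m}\cRm$ with $m \geq 2$; hence $\cD_{2k,\cI}(\cu)$ is a linear combination of complete contractions of the form~\eqref{eqn:polydifferential-precontraction} with one function factor $\cu$ and $m \geq 2$ curvature factors, so that it meets the hypothesis of case~(iii) of \cref{dependence} that there be at least $j + 2$ factors. The essential bookkeeping point is that $\cD_{2k,\cI}$ has weight $-2k - 2\ell = -2(k+\ell)$, so the parameter playing the role of ``$k$'' in \cref{dependence} is $k + \ell$, not $k$. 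For $\ell \geq 1$ the hypothesis $k + \ell \leq n/2 + 1$ together with the structural condition just noted puts us in case~(iii) of \cref{dependence}; for $\ell = 0$ there are no curvature factors and $k \leq n/2$ puts us in case~(ii). In either case \cref{dependence} yields independence of the ambiguity of $\cg$.

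Combining the two steps, $\cD_{2k,\cI}$ is tangential, and therefore descends to a conformally invariant differential operator $D_{2k,\mI}$; the weight bookkeeping $w + w' - 2k = -\frac{n+2k+2\ell}{2}$ identifies the target density bundle. The Gamma-to-factorial simplification of the first step is routine, so the step most in need of care is the application of \cref{dependence}: one must correctly read off the operator weight as $-2(k+\ell)$ and verify the structural ``at least $j+2$ factors'' condition, which is precisely what explains the asymmetry between the hypotheses $k \leq n/2$ (when $\ell = 0$) and $k + \ell \leq n/2 + 1$ (when $\ell \geq 1$).
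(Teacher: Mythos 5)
Your proposal is correct and follows the same route as the paper: identify $\cD_{2k,\cI}$ with the operator $\cD_{2k,w,\cI}$ of \cref{construction-lemma} (with $w=-\frac{n}{2}+k+\ell$, $w'=-2\ell$) to obtain tangentiality, and invoke \cref{dependence} for independence of the ambiguity of $\cg$. Your explicit checks --- the Gamma-to-factorial coefficient match, and the observation that the relevant weight in \cref{dependence} is $-2(k+\ell)$ while $\cI$ supplies at least two curvature factors (so case~(iii) applies when $\ell\geq 1$ and case~(ii) when $\ell=0$) --- are precisely the bookkeeping the paper's one-line proof leaves implicit.
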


\begin{proof}
 \Cref{dependence} implies that $\cD_{2k,\cI}$ is independent of the ambiguity of $\cg$.
 Since $\cI \in \cmE[-2\ell]$, applying \cref{construction-lemma} then implies that
 \begin{equation}
  \label{eqn:decompose-linear}
  \cD_{2k,\cI} = \cD_{2k,w,\cI}
 \end{equation}
 is tangential, where $w := -\frac{n}{2}+k+\ell$.
\end{proof}

The curved Ovsienko--Redou operators arise by looking for tangential linear combinations of the operators $\cD_{2k-2s,\cDelta^{s}\cf}$.

\begin{corollary}
 \label{ovsienko-redou}
 Let $(M^n,\kc)$ be a conformal manifold.
 Let $k \in \bN$;
 if $n$ is even, then assume additionally that $k \leq n/2$.
 Then
 \begin{align*}
  \cD_{2k}(\cu \otimes \cv) & := \sum_{r+s+t=k} a_{r,s,t} \cDelta^r\left( (\cDelta^s\cu)(\cDelta^t\cv) \right) , \\
 a_{r,s,t} & := \frac{k!}{r!s!t!}\frac{\Gamma\bigl(\frac{n+4k}{6}-r\bigr)\Gamma\bigl(\frac{n+4k}{6}-s\bigr)\Gamma\bigl(\frac{n+4k}{6}-t\bigr)}{\Gamma\bigl(\frac{n-2k}{6}\bigr)\Gamma\bigl(\frac{n+4k}{6}\bigr)^2},
 \end{align*}
 defines a tangential bidifferential operator $\cD_{2k} \colon \cmE\bigl[-\frac{n-2k}{3}\bigr]^{\otimes 2} \to \cmE\bigl[-\frac{2n+2k}{3}\bigr]$.
 In particular, the bidifferential operator
 \begin{align*}
  D_{2k} & \colon \mE\left[ -\frac{n-2k}{3} \right]^{\otimes 2} \to \mE\left[ -\frac{2n+2k}{3} \right] , \\
  D_{2k}( \iota^\ast\cu \otimes \iota^\ast\cv ) & := \cD_{2k}( \cu \otimes \cv ) \circ \iota
 \end{align*}
 is conformally invariant.
\end{corollary}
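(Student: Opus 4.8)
The plan is to realize $\cD_{2k}$ as a linear combination of the linear operators $\cD_{2(k-t),w,\cf}$ from \cref{construction-lemma}, taking $\cf := \cDelta^t\cv$, and then to exploit the symmetry of the coefficients $a_{r,s,t}$ to handle the second argument. Set $w := -\frac{n-2k}{3}$ and abbreviate $\alpha := \frac{n+4k}{6}$, so that $\frac{n-2k}{6} = \alpha - k$ and, crucially, $\frac{n}{2} + w = \alpha$. Since $\cDelta$ lowers weight by $2$, we have $\cDelta^t\cv \in \cmE[w-2t]$ whenever $\cv \in \cmE[w]$, so for each $0 \leq t \leq k$ the operator $\cD_{2(k-t),w,\cDelta^t\cv}$ of \cref{construction-lemma} is defined (with $k \mapsto k-t$ and $w' = w-2t$) and satisfies: $\iota^\ast\cD_{2(k-t),w,\cDelta^t\cv}(\cu)$ depends only on $\iota^\ast\cu$, the function $\cDelta^t\cv$, and $\cg$.

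First I would compute the coefficients of $\cD_{2(k-t),w,\cDelta^t\cv}$. Writing the multiplier index as $s$ and the outer power as $r = k-t-s$, a direct simplification of the Gamma-quotient in \cref{construction-lemma}—using $\frac{n}{2}+w = \alpha$, the identity $s + (k-t) - 2w + 2t - \frac{n}{2} = \alpha - r$, and its $s=0$ specialization $\alpha-(k-t)$—gives
\begin{equation*}
 \cD_{2(k-t),w,\cDelta^t\cv}(\cu) = \sum_{r+s = k-t} \binom{k-t}{s}\frac{\Gamma(\alpha-r)\Gamma(\alpha-s)}{\Gamma(\alpha-k+t)^2}\,\cDelta^r\bigl((\cDelta^s\cu)(\cDelta^t\cv)\bigr).
\end{equation*}
The essential observation is that this has exactly the same dependence on $r$ and $s$ as $a_{r,s,t}$, namely through $\tfrac{1}{r!s!}\Gamma(\alpha-r)\Gamma(\alpha-s)$. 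Hence there is a constant $c_t$, independent of $r$ and $s$, with $c_t\binom{k-t}{s}\Gamma(\alpha-r)\Gamma(\alpha-s)/\Gamma(\alpha-k+t)^2 = a_{r,s,t}$; solving yields $c_t = \binom{k}{t}\frac{\Gamma(\alpha-t)\Gamma(\alpha-k+t)^2}{\Gamma(\alpha-k)\Gamma(\alpha)^2}$, and therefore
\begin{equation*}
 \cD_{2k}(\cu\otimes\cv) = \sum_{t=0}^{k} c_t\,\cD_{2(k-t),w,\cDelta^t\cv}(\cu).
\end{equation*}

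With this decomposition in hand I would conclude as follows. Each summand is tangential in $\cu$ by \cref{construction-lemma} (treating all derivatives of $\cv$ as fixed coefficients), so $\iota^\ast\cD_{2k}(\cu\otimes\cv)$ is unchanged when $\cu$ is altered off $\iota(\mG)$ with $\iota^\ast\cu$ fixed. Because $a_{r,s,t}$ is symmetric in $s \leftrightarrow t$, the operator is symmetric in its two arguments, and the same decomposition with the roles of $\cu$ and $\cv$ exchanged shows it is also tangential in $\cv$. Combining the two slotwise statements yields that $\iota^\ast\cD_{2k}(\cu\otimes\cv)$ depends only on $\iota^\ast\cu$, $\iota^\ast\cv$, and $\cg$ modulo its ambiguity, i.e.\ $\cD_{2k}$ is tangential. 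Independence of $\cD_{2k}$ from the ambiguity of $\cg$ follows from \cref{dependence}: $\cD_{2k}$ is a bidifferential operator of weight $-2k$ with no curvature factors ($\ell = j = 2$), so case (i) (for $n$ odd) or case (ii) (for $n$ even, using $k \leq n/2$) applies. The conformal invariance of the induced operator $D_{2k}$ is then immediate from the general correspondence between tangential ambient polydifferential operators and conformally invariant operators on $M$.

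I would expect the main obstacle to be the bookkeeping in the Gamma-quotient simplification—verifying that $c_t$ is genuinely independent of $r$ and $s$—together with the care required when $n$ is even, where several arguments of the Gamma functions can be non-positive integers, so the identity $c_t\binom{k-t}{s}\Gamma(\alpha-r)\Gamma(\alpha-s)/\Gamma(\alpha-k+t)^2 = a_{r,s,t}$ must be read through the functional equation $\Gamma(z+1)=z\Gamma(z)$ (equivalently, by analytic continuation in $n$) rather than by naive substitution of potentially singular values. The reduction of joint tangentiality to the two slotwise statements via the $s \leftrightarrow t$ symmetry is the conceptually clean step that makes the bidifferential case no harder than the linear one treated in \cref{linear}.
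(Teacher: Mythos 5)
Your proposal is correct and follows essentially the same route as the paper: it establishes independence of the ambiguity via \cref{dependence} and then proves tangentiality by decomposing $\cD_{2k}$ as $\sum_t c_t\,\cD_{2(k-t),w,\cDelta^t\cv}(\cu)$ with $c_t = \binom{k}{t}\Gamma(\alpha-t)\Gamma(\alpha-k+t)^2/\bigl(\Gamma(\alpha-k)\Gamma(\alpha)^2\bigr)$, which is exactly the paper's Equation~\eqref{eqn:decompose-ovsienko-redou} with the roles of $\cu$ and $\cv$ interchanged, invoking the $s\leftrightarrow t$ symmetry of $a_{r,s,t}$ to handle the other slot. Your explicit verification that $c_t$ is independent of $r$ and $s$ supplies a computation the paper leaves implicit, but the argument is the same.
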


\begin{proof}
 \Cref{dependence} implies that $\cD_{2k}$ is independent of the ambiguity of $\cg$.
 Since $\cD_{2k}(\cu \otimes \cv) = \cD_{2k}(\cv \otimes \cu)$, it thus suffices to show that $\cD_{2k}(\cu \otimes \cv) \circ \iota$ depends only on $\iota^\ast\cv$, $\cu$, and $\cg$ in order to conclude that $\cD_{2k}$ is tangential.
 Observe that
 \begin{equation}
  \label{eqn:decompose-ovsienko-redou}
  \cD_{2k}(\cu \otimes \cv) = \sum_{s=0}^k \binom{k}{s}\frac{\Gamma\bigl(\frac{n+4k}{6}-s\bigr)\Gamma\bigl(\frac{n-2k}{6}+s\bigr)^2}{\Gamma\bigl(\frac{n-2k}{6}\bigr)\Gamma\bigl(\frac{n+4k}{6}\bigr)^2}\cD_{2k-2s,-\frac{n-2k}{3},\cDelta^s\cu}(\cv) .
 \end{equation}
 Applying \cref{construction-lemma} to each $\cD_{2k-2s,-\frac{n-2k}{3},\cDelta^s\cu}$ implies that $\cD_{2k}(\cu \otimes \cv) \circ \iota$ depends only on $\iota^\ast\cv$, $\cu$, and $\cg$.
\end{proof}

\section{Poincar\'e spaces}
\label{sec:scattering}

In this section we give an equivalent description of the operators $\cD_{2k,w,\cf}$ from \cref{construction-lemma} in terms of the Poincar\'e space determined by a straight and normal ambient metric.
We restrict our attention to the case $w = -\frac{n+w'-2k}{2}$, where $w'$ is the weight of $\cf$, as it is only in this case that the operator on $(M^n,g)$ induced by $\cD_{2k,w,\cf}$ can be formally self-adjoint.

Let $(\cmG,\cg)$ be the straight and normal ambient space~\eqref{eqn:straight-and-normal} for a representative $g \in \kc$ of a conformal manifold $(M^n,\kc)$.
Set $r := \sqrt{-2\rho}$ and $s := rt$ in the domain $\cmG_+ := \{ \rho < 0, t > 0 \} \subseteq \cmG$.
In these coordinates,
\begin{equation}
 \label{eqn:rewrite-expansion-of-g}
 \begin{split}
  \cg & = -ds^2 + s^2g_+ , \\
  g_+ & = r^{-2}(dr^2 + g_{-r^2/2}) .
 \end{split}
\end{equation}
The equation $\cg(T,T)=-1$ defines a hypersurface $\mathring{X} \subseteq \cmG$.
Since $T = t\partial_t$, we see that $\mathring{X} = \{ s=1 \}$.
Using Equation~\eqref{eqn:rewrite-expansion-of-g} to compute the Laplacian $\cDelta$ of $\cg$ in terms of the Laplacian of $g_+$ yields
\begin{equation}
 \label{eqn:poincare-laplacian}
 \cDelta = s^{-2}\left( \Delta_{g_+} - (s\partial_s)^2 - ns\partial_s \right) .
\end{equation}

Suppose that $\cu \in \cmE[w]$.
Thus there is a one-parameter family of functions $u_\rho \in C^\infty(M)$ such that $\cu(t,x,\rho) = t^wu_\rho(x)$.
Equivalently, $\cu(r,x,s) = s^w\hu(r,x)$, where $\hu \in C^\infty(\mathring{X})$ is the function $\hu(r,x) = r^{-w}u_{-r^2/2}(x)$.
In particular, $\cu$ is determined by its restriction $\hu = \cu\rv_{\mathring{X}}$.
Moreover, Equation~\eqref{eqn:poincare-laplacian} implies that
\begin{equation}
 \label{eqn:cDelta}
 \cDelta\cu = s^{w-2}\left( \Delta_{g_+} - w(n+w) \right)\hu .
\end{equation}
This leads to a formula relating the operators $\cD_{2k,w,\cf}$ and
\begin{equation*}
 P_{2j,w} := \prod_{i=0}^{j-1} \left( \Delta_{g_+} - (w-2i)(n+w-2i) \right)
\end{equation*}
on $(\mathring{X},g_+)$:

\begin{lemma}
 \label{expand-Df}
 Let $(\cmG,\cg)$ be the straight and normal ambient space for a representative $g \in \kc$ of a conformal manifold $(M^n,\kc)$.
 Let $(\mathring{X},g_+)$ be the Poincar\'e manifold determined by the variables $r,s$ as in Equation~\eqref{eqn:rewrite-expansion-of-g}.
 Let $\cf \in \cmE[w']$ and let $k \in \bN$.
 Set $w := -\frac{n+w'-2k}{2}$.
 If $\cu \in \cmE\bigl[ w \bigr]$, then
 \begin{align*}
  \cD_{2k,w,\cf}(\cu)\rv_{\mathring{X}} &= \sum_{i=0}^k\sum_{j=0}^{k-i} c_{i,j}( P_{2i,w} \circ \hf \circ P_{2j,w} )(\hu) ,\\
  c_{i,j}&:=4^{k-i-j}\frac{k!}{i!j!}\frac{\Gamma\bigl(\frac{n}{2}+w-i\bigr)\Gamma\bigl(\frac{n}{2}+w-j\bigr)}{\Gamma\bigl(\frac{n}{2}+w-k\bigr)^2} ,
 \end{align*}
where $\hu:=\cu|_{\mathring{X}}$ and $\hf:=\cf|_{\mathring{X}}$.
\end{lemma}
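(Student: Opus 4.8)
The plan is to iterate Equation~\eqref{eqn:cDelta} so as to replace every power of the ambient Laplacian occurring in $\cD_{2k,w,\cf}$ by a polynomial in $\Delta_{g_+}$, and then to reorganize the resulting sum into the claimed symmetric form via a polynomial identity. The first step is a general iteration formula: Equation~\eqref{eqn:cDelta} uses only the homogeneity of $\cu$, not its specific weight, so for every $\cv\in\cmE[\omega]$ and every $m\in\bN$,
\begin{equation*}
 \cDelta^{m}\cv = s^{\omega-2m}\,P_{2m,\omega}(\hv), \qquad \hv:=\cv|_{\mathring{X}} .
\end{equation*}
This follows by induction on $m$: the case $m=1$ is Equation~\eqref{eqn:cDelta} (with $\Delta_{g_+}-\omega(n+\omega)$ the $i=0$ factor of $P_{2,\omega}$), and since $\cDelta\cv\in\cmE[\omega-2]$, the inductive step applies the case $m=1$ at weight $\omega-2$ and reads off the next factor of $P_{2m,\omega}$.

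Next I substitute this into $\cD_{2k,w,\cf}(\cu)=\sum_{j=0}^{k}a_j\,\cDelta^{k-j}(\cf\,\cDelta^{j}\cu)$ from \cref{construction-lemma}. Applying the iteration formula with $\omega=w$ gives $\cDelta^{j}\cu=s^{w-2j}P_{2j,w}(\hu)$; multiplying by $\cf=s^{w'}\hf$ yields $\cf\,\cDelta^{j}\cu=s^{w+w'-2j}\,\hf\,P_{2j,w}(\hu)\in\cmE[w+w'-2j]$. Applying the iteration formula once more, now at weight $w+w'-2j$, and restricting to $\mathring{X}=\{s=1\}$ gives
\begin{equation*}
 \cDelta^{k-j}(\cf\,\cDelta^{j}\cu)|_{\mathring{X}} = P_{2(k-j),\,w+w'-2j}\bigl(\hf\,P_{2j,w}(\hu)\bigr) .
\end{equation*}
The crucial observation is that the hypothesis $w=-\frac{n+w'-2k}{2}$, i.e.\ $n+2w+w'=2k$, makes the outer operator a \emph{shifted} product of the very factors appearing in $P_{2j,w}$: writing $Q_m:=\Delta_{g_+}-(w-2m)(n+w-2m)$, the substitution $m=k-j-i$ in the definition of $P_{2(k-j),\,w+w'-2j}$ shows that
\begin{equation*}
 P_{2(k-j),\,w+w'-2j}=\prod_{m=1}^{k-j}Q_m , \qquad P_{2j,w}=\prod_{m=0}^{j-1}Q_m .
\end{equation*}

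It remains to reorganize $\sum_{j}a_j\bigl(\prod_{m=1}^{k-j}Q_m\bigr)\circ\hf\circ\bigl(\prod_{m=0}^{j-1}Q_m\bigr)$ into the claimed double sum. Because $\hf$ is merely multiplication and each $Q_m$ is a polynomial in the single operator $\Delta_{g_+}$, it suffices to prove the corresponding identity of polynomials in two independent indeterminates $Y$ and $Z$ recording, respectively, the factors applied after and before $\hf$; indeed, expanding in the monomials $\Delta_{g_+}^{a}\circ\hf\circ\Delta_{g_+}^{b}$ shows that any such polynomial identity yields the operator identity. The polynomials $\prod_{m=0}^{j-1}(Z-\mu_m)$, $j=0,\dots,k$, with $\mu_m:=(w-2m)(n+w-2m)$, are linearly independent, so matching their coefficients reduces the claim, for each $j$, to the one-variable identity
\begin{equation*}
 a_j\prod_{m=1}^{k-j}(Y-\mu_m)=\sum_{i=0}^{k-j}c_{i,j}\prod_{m=0}^{i-1}(Y-\mu_m) ;
\end{equation*}
equivalently, the connection coefficients expressing the shifted product $\prod_{m=1}^{k-j}(Y-\mu_m)$ in the basis $\{\prod_{m=0}^{i-1}(Y-\mu_m)\}_{i=0}^{k-j}$ must equal $c_{i,j}/a_j$.

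I expect this connection-coefficient identity to be the main obstacle, although it is ultimately elementary. I would prove it by induction on $p:=k-j$, using $\prod_{m=1}^{p}(Y-\mu_m)=(Y-\mu_p)\prod_{m=1}^{p-1}(Y-\mu_m)$ together with the splitting $(Y-\mu_p)\prod_{m=0}^{i-1}(Y-\mu_m)=\prod_{m=0}^{i}(Y-\mu_m)+(\mu_i-\mu_p)\prod_{m=0}^{i-1}(Y-\mu_m)$ and the elementary fact $\mu_i-\mu_p=4(p-i)\bigl(\tfrac{n}{2}+w-i-p\bigr)$. The genuinely delicate part is the bookkeeping of the $\Gamma$-function ratios: the hypothesis $n+2w+w'=2k$ first collapses $a_j$ to $\binom{k}{j}\Gamma(\alpha-k+j)\Gamma(\alpha-j)/\Gamma(\alpha-k)^2$ with $\alpha:=\tfrac{n}{2}+w$, and then, after collecting terms, the inductive step hinges on the simplification $i(\alpha-i)+(p-i)(\alpha-i-p)=p(\alpha-p)$, which is exactly what propagates the coefficients as the claimed $c_{i,j}$. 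Verifying this collapse is the computational heart of the argument.
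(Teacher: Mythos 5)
Your proposal is correct and follows essentially the same route as the paper: iterating Equation~\eqref{eqn:cDelta} gives the paper's Equation~\eqref{eqn:expand-Df1}, your ``shifted product'' $\prod_{m=1}^{k-j}Q_m$ is exactly the paper's identification $P_{2k-2j,2k-2j-n-w}=P_{2k-2j,w-2}$, and your connection-coefficient identity (with the collapse of $a_j$ to $\binom{k}{j}\Gamma(\alpha-k+j)\Gamma(\alpha-j)/\Gamma(\alpha-k)^2$) is precisely the paper's Equation~\eqref{eqn:expand-Df3}, which the paper likewise proves by an elementary induction using $\mu_0-\mu_p=4p\bigl(\tfrac{n}{2}+w-p\bigr)$. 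The inductive step you flag as the computational heart does go through exactly as you describe, so the only difference is the cosmetic reformulation as a two-indeterminate polynomial identity.
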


\begin{proof}
 Note that $w+w'-k=k-n-w$.
 \Cref{construction-lemma} and Equation~\eqref{eqn:cDelta} yield
 \begin{multline}
  \label{eqn:expand-Df1}
  \cD_{2k,w,\cf}(\cu)\rv_{\mathring{X}} = \sum_{j=0}^{k} \binom{k}{j}\frac{\Gamma\bigl(\frac{n}{2}+w-k+j\bigr)\Gamma\bigl(\frac{n}{2}+w-j\bigr)}{\Gamma\bigl(\frac{n}{2}+w-k\bigr)^2} \\
   \times (P_{2k-2j,2k-2j-n-w} \circ \hf \circ P_{2j,w}) (\hu) .
 \end{multline}
 On the one hand, a simple reindexing in the definition of $P_{2j,w}$ yields
 \begin{equation}
  \label{eqn:expand-Df2}
  P_{2k-2j,2k-2j-n-w} = P_{2k-2j,w-2} .
 \end{equation}
 On the other hand, direct computation yields
 \begin{equation*}
  \Delta_{g_+} - (w-2j)(n+w-2j) = \Delta_{g_+} - w(n+w) + 2j(n+2w-2j) .
 \end{equation*}
 It follows that
 \begin{equation*}
  P_{2j,w-2} = P_{2j,w} + 4j\left(\frac{n}{2}+w-j\right)P_{2j-2,w-2} .
 \end{equation*}
 A simple induction argument yields
 \begin{equation}
  \label{eqn:expand-Df3}
  P_{2j,w-2} = \sum_{i=0}^j 4^i\frac{j!}{(j-i)!}\frac{\Gamma\bigl(\frac{n}{2}+w-j+i\bigr)}{\Gamma\bigl(\frac{n}{2}+w-j\bigr)}P_{2j-2i,w} .
 \end{equation}
 Combining Equations~\eqref{eqn:expand-Df1}, \eqref{eqn:expand-Df2}, and~\eqref{eqn:expand-Df3} yields
 \begin{multline*}
  \cD_{2k,w,\cf}(\cu)\rv_{\mathring{X}} = \sum_{j=0}^k \sum_{i=0}^{k-j} 4^i\frac{k!}{j!(k-i-j)!}\frac{\Gamma\bigl(\frac{n}{2}+w-k+i+j\bigr)\Gamma\bigl(\frac{n}{2}+w-j\bigr)}{\Gamma\bigl(\frac{n}{2}+w-k\bigr)^2} \\
   \times (P_{2k-2i-2j,w} \circ \hf \circ P_{2j,w})(\hu) .
 \end{multline*}
 Reindexing the sum yields the final result.
\end{proof}

\section{Formal self-adjointness}
\label{sec:conclusion}

We are now ready to prove the formal self-adjointness of the curved Ovsienko--Redou operators $D_{2k}$ and the conformally invariant operators $D_{2k,\mI}$.
To that end, we identify their associated Dirichlet forms as the logarithmic term in the expansion of the corresponding Dirichlet form in a Poincar\'e space.
More generally:

\begin{lemma}
 \label{log-energy}
 Let $(\cmG,\cg)$ be the straight and normal ambient space for a representative $g \in \kc$ of a conformal manifold $(M^n,\kc)$.
 Denote by $(\mathring{X},g_+)$ the Poincar\'e space determined by the variables $r,s$ as in Equation~\eqref{eqn:rewrite-expansion-of-g}.
 Let $\cf \in \cmE[w']$ and let $k \in \bN$.
 Set $w := -\frac{n+w'-2k}{2}$.
 Define $D_{2k,w,\cf} \colon C^\infty(M) \to C^\infty(M)$ by
 \begin{equation*}
  D_{2k,w,\cf}(u) := \cD_{2k,w,\cf}(t^{w}\pi^\ast u)\rv_{t=1,\rho=0} .
 \end{equation*}
 If $\cu,\cv \in \cmE[w]$, then
 \begin{equation*}
   \int_{r>\varepsilon}  \left.\left(\cv \cD_{2k,w,\cf}(\cu) \right)\right|_{\mathring{X}} \dvol_{g_+} = -\left(\int_M v D_{2k,w,\cf}(u) \dvol_g \right)\log \varepsilon + O(1) 
 \end{equation*}
 as $\varepsilon \to 0$, where $u := \cu\rv_{t=1,\rho=0}$ and $v := \cv\rv_{t=1,\rho=0}$.
\end{lemma}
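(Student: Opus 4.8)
The plan is to avoid any indicial analysis of the operators $P_{2j,w}$ and instead exploit homogeneity directly: the integrand $\cv\cD_{2k,w,\cf}(\cu)$ is the restriction to $\mathring{X}$ of a \emph{smooth} ambient density whose weight is exactly $-n$, and this forces a clean $r$-expansion with no logarithmic terms. First I would track weights. Since $\cDelta\colon\cmE[m]\to\cmE[m-2]$ and multiplication by $\cf\in\cmE[w']$ raises weight by $w'$, the defining formula gives $\cD_{2k,w,\cf}(\cu)\in\cmE[w+w'-2k]$; the normalization $w=-\frac{n+w'-2k}{2}$ yields $w+w'-2k=-n-w$, so the product lies in $\cmE[-n]$. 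Writing it as $t^{-n}W(x,\rho)$ with $W$ smooth and using that $t=r^{-1}$ and $\rho=-r^2/2$ on $\mathring{X}$ by Equation~\eqref{eqn:rewrite-expansion-of-g}, I obtain
\begin{equation*}
 \left.\left(\cv\cD_{2k,w,\cf}(\cu)\right)\right|_{\mathring{X}} = r^{n}\,W\!\left(x,-\tfrac{r^2}{2}\right),
\end{equation*}
i.e.\ $r^n$ times a function smooth and even in $r$. The individually non-integer powers $r^{-w}$ from $\cv$ and $r^{n+w}$ from $\cD_{2k,w,\cf}(\cu)$ cancel, leaving the integer power $r^n$. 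The value at $r=0$ is $W(x,0)=v\cdot\bigl(\cD_{2k,w,\cf}(\cu)|_{t=1,\rho=0}\bigr)$, and by the tangential property from \cref{construction-lemma} the bracketed factor depends only on $\iota^\ast\cu$ (which equals $t^w\pi^\ast u$) and hence equals $D_{2k,w,\cf}(u)$; thus the integrand is $r^n\bigl(vD_{2k,w,\cf}(u)+O(r^2)\bigr)$.

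Next I would bring in the volume form. From Equation~\eqref{eqn:rewrite-expansion-of-g}, $g_+=r^{-2}(dr^2+g_{-r^2/2})$ on the $(n+1)$-dimensional space $\mathring{X}$, so $\dvol_{g_+}=r^{-(n+1)}\,dr\wedge\dvol_{g_{-r^2/2}}$ with $\dvol_{g_{-r^2/2}}=\dvol_g+O(r^2)$. Integrating over $M$ at fixed $r$ gives $\int_M(\,\cdot\,)\,\dvol_{g_{-r^2/2}}=r^{n}\Phi(r^2)$ for a smooth $\Phi$ with $\Phi(0)=\int_M vD_{2k,w,\cf}(u)\,\dvol_g$. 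Hence the full integrand against $\dvol_{g_+}$ is $r^{-1}\Phi(r^2)\,dr$, and integrating over $r\in(\varepsilon,r_{\max})$ the summand $\Phi(0)r^{-1}$ produces $-\Phi(0)\log\varepsilon+O(1)$ while the remaining terms $r\Phi'(0)+\cdots$ integrate to $O(1)$. This is exactly the claimed identity, with the correct sign.

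The step I expect to be the crux is the justification that no logarithm appears in the expansion of the integrand. The decisive mechanism is that $\cv\cD_{2k,w,\cf}(\cu)$ is a genuinely smooth ambient function, homogeneous of weight $-n$ equal to minus the dimension of $M$, so that the $r^{-(n+1)}$ from $\dvol_{g_+}$ combines with its leading $r^{n}$ to leave the single log-producing power $r^{-1}$; in particular one never needs to inspect the individual summands of \cref{expand-Df}, which may carry logarithms from colliding indicial roots of the $P_{2j,w}$ that only cancel in the sum. Once this smoothness-and-weight bookkeeping is in place, reading off the coefficient of $-\log\varepsilon$ as $\int_M vD_{2k,w,\cf}(u)\,\dvol_g$ is routine.
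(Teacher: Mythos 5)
Your proposal is correct and follows essentially the same route as the paper: both arguments rest on the observation that $\cv\,\cD_{2k,w,\cf}(\cu)\in\cmE[-n]$, so its restriction to $\mathring{X}$ is $r^{n}$ times a smooth even function of $r$ with leading coefficient $vD_{2k,w,\cf}(u)$, which combined with $\dvol_{g_+}=r^{-n-1}(1+O(r^2))\,\dvol_g\,dr$ yields the single $r^{-1}$ power responsible for the $-\log\varepsilon$ term. Your explicit appeal to the tangentiality from \cref{construction-lemma} to identify the leading coefficient for a general extension $\cu$ (rather than $t^{w}\pi^{\ast}u$) is a detail the paper leaves implicit, but the argument is the same.
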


\begin{proof}
 Note that $2w+w'-2k=-n$.
 Thus $\cv \cD_{2k,w,\cf}(\cu) \in \cmE[-n]$.
 Therefore
 \begin{equation*}
  \bigl(\cv \cD_{2k,w,\cf}(\cu) \bigr)|_{\mathring{X}}=v D_{2k,w,\cf}(u) r^n+O(r^{n+2})
 \end{equation*}
 as $r \to 0$.
 Graham observed~\cite{Graham2000}*{Equations~(3.1) and~(3.2)} that
 \begin{equation*}
  \dvol_{g_+} = r^{-n-1} \left(1+O(r^2)\right) \dvol_{g} dr 
 \end{equation*}
 as $r \to 0$.
 Direct calculation yields the final result.
\end{proof}

Combining \cref{expand-Df,log-energy} implies that $D_{2k,w,\cf}$ is formally self-adjoint.

\begin{proposition}
 \label{fsa-key-step}
 Let $(\cmG,\cg)$ be the straight and normal ambient space for a representative $g \in \kc$ of a conformal manifold $(M^n,\kc)$.
 Denote by $(\mathring{X},g_+)$ the Poincar\'e space as in Equation~\eqref{eqn:rewrite-expansion-of-g}.
 Let $\cf \in \cmE[w']$ and let $k \in \bN$.
 Set $w := -\frac{n+w'-2k}{2}$.
 Define $D_{2k,w,\cf} \colon C^\infty(M) \to C^\infty(M)$ as in \cref{log-energy}.
 Then
 \begin{equation*}
  \int_M v D_{2k,w,\cf}u \dvol_g = \int_M u D_{2k,w,\cf}(v) \dvol_g
 \end{equation*}
 for every compactly supported $u,v \in C^\infty(M)$.
\end{proposition}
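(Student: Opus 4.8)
The plan is to realize both sides of the asserted identity as the coefficient of $-\log\varepsilon$ in the expansion of a single Dirichlet form on the Poincar\'e space $(\mathring X,g_+)$, and then to show that the antisymmetric combination of the two Dirichlet forms carries no logarithmic term. Concretely, set $\cu := t^w\pi^\ast u$ and $\cv := t^w\pi^\ast v$, so that $\hu := \cu\rv_{\mathring X}$ and $\hv := \cv\rv_{\mathring X}$ are the associated functions on $\mathring X$ and $u = \cu\rv_{t=1,\rho=0}$, $v = \cv\rv_{t=1,\rho=0}$. By \cref{expand-Df}, the restriction $\cD_{2k,w,\cf}(\cu)\rv_{\mathring X}$ equals $A\hu$, where
\[
 A := \sum_{i=0}^k\sum_{j=0}^{k-i} c_{i,j}\, P_{2i,w}\circ \hf\circ P_{2j,w}
\]
is a differential operator on $(\mathring X,g_+)$ (with $\hf$ understood as multiplication), and the coefficients $c_{i,j}$ are manifestly symmetric in $i$ and $j$.

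First I would observe that $A$ is formally self-adjoint on $(\mathring X,g_+)$. Indeed, each factor $P_{2j,w}$ is a composition of the commuting, formally self-adjoint operators $\Delta_{g_+} - (w-2i)(n+w-2i)$ and is therefore itself formally self-adjoint, while multiplication by $\hf$ is self-adjoint; taking the formal adjoint of each summand interchanges $P_{2i,w}$ and $P_{2j,w}$, and the symmetry $c_{i,j}=c_{j,i}$ then gives $A^\ast = A$. Consequently the Lagrange--Green identity provides a vector field $W$, built polydifferentially from $\hu$, $\hv$, and $\hf$, with $\hv\,(A\hu) - \hu\,(A\hv) = \divsymb_{g_+} W$. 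Since $u$ and $v$ are compactly supported, integrating over $\{r>\varepsilon\}$ and applying the Divergence Theorem converts the left-hand side into a flux integral over the slice $\{r=\varepsilon\}$, the only other boundary being a fixed slice $\{r=r_0\}$ whose contribution is independent of $\varepsilon$, hence $O(1)$.

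Next I would control the two ends of this identity. On the one hand, $\hv\,(A\hu) = \bigl(\cv\,\cD_{2k,w,\cf}(\cu)\bigr)\rv_{\mathring X}$, so \cref{log-energy} applied to $(\cu,\cv)$ and to $(\cv,\cu)$ gives
\begin{multline*}
 \int_{r>\varepsilon}\bigl(\hv\,(A\hu)-\hu\,(A\hv)\bigr)\dvol_{g_+} \\
 = -\left(\int_M \bigl(v D_{2k,w,\cf}(u) - u D_{2k,w,\cf}(v)\bigr)\dvol_g\right)\log\varepsilon + O(1).
\end{multline*}
On the other hand, the boundary flux has no logarithmic term: because the ambient metric is straight and normal, the functions $\hu$, $\hv$, $\hf$ and the coefficients of $g_+$ are each a power of $r$ times a smooth function of $r^2$, so that $\langle W,\nu\rangle\rv_{r=\varepsilon}$ together with $\darea_{g_+}$ is an ordinary series in $\varepsilon$ with no $\log\varepsilon$; crucially, the flux involves no integration in $r$, which is the sole mechanism producing a logarithm in \cref{log-energy}. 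Matching the coefficients of $\log\varepsilon$ on the two sides forces $\int_M \bigl(v D_{2k,w,\cf}(u) - u D_{2k,w,\cf}(v)\bigr)\dvol_g = 0$, which is the desired identity.

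The main obstacle is precisely this last point: verifying rigorously that the boundary flux carries no $\log\varepsilon$. This rests on the fact that, for a straight and normal ambient metric, every object entering $W$ is polyhomogeneous in $r$ with only even integer powers and no logarithmic terms, so that evaluation at $r=\varepsilon$ and integration over the compact slice can yield only powers of $\varepsilon$. I expect this verification to be essentially routine but in need of careful bookkeeping of the expansions; once it is secured, the formal self-adjointness of $A$ on $(\mathring X,g_+)$ together with \cref{log-energy} completes the argument.
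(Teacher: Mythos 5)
Your proposal is correct and follows essentially the same route as the paper: both identify the antisymmetrized Dirichlet form as the coefficient of $\log\varepsilon$ in the corresponding integral on the Poincar\'e space via \cref{log-energy} and \cref{expand-Df}, convert that integral into a boundary flux over $\{r=\varepsilon\}$ using the self-adjoint structure of $\sum c_{i,j}P_{2i,w}\circ\hf\circ P_{2j,w}$, and observe that such a flux carries no logarithmic term. The only difference is presentational: where you invoke the general Lagrange--Green identity for the formally self-adjoint operator $A$, the paper makes the boundary term explicit by telescoping with the factorization $P_{2i,w}=P_{2i-2\ell,w-2\ell}\circ P_{2\ell,w}$ and applying Green's identity for $\Delta_{g_+}$ term by term.
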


\begin{proof}
 Let $u,v \in C^\infty(M)$ be compactly supported.
 Set $\cu := t^w\pi^\ast u$ and $\cv := t^w\pi^\ast v$.
 Given a function $\phi = \phi(\varepsilon)$ such that $\phi \in O(\log\varepsilon)$ as $\varepsilon\to 0$, denote
 \begin{equation*}
  \logpart \phi := \lim_{\varepsilon\to0} \frac{\phi(\varepsilon)}{\log(1/\varepsilon)} .
 \end{equation*}
 First, \cref{log-energy} yields
 \begin{multline}
  \label{eqn:compute-adjoint}
  \int_M \left( vD_{2k,w,\cf}(u) - uD_{2k,w,\cf}(v) \right) \dvol_g \\
   = \logpart \int_{r > \varepsilon} \left.\left( \cv \cD_{2k,w,\cf}(\cu) - \cu \cD_{2k,w,\cf}(\cv) \right)\right|_{\mathring{X}} \dvol_{g_+} .
 \end{multline}
 Second, \cref{expand-Df} and the symmetry $c_{i,j} = c_{j,i}$ yield
 \begin{multline}
  \label{eqn:apply-expand-Df}
  \int_{r > \varepsilon} \left.\left( \cv \cD_{2k,w,\cf}(\cu) - \cu \cD_{2k,w,\cf}(\cv) \right)\right|_{\mathring{X}} \dvol_{g_+} \\
   = \sum_{i+j=0}^{k} c_{i,j} \int_{r>\varepsilon} \left( \hv (P_{2i,w} \circ \hf \circ P_{2j,w})(\hu) - \hu (P_{2j,w} \circ \hf \circ P_{2i,w})(\hv) \right) \dvol_{g_+} ,
 \end{multline}
 where $\hu := \cu \rv_{\mathring{X}}$ and $\hv := \cv \rv_{\mathring{X}}$.
 Third, observe that if $0 \leq j \leq i$, then
 \begin{equation*}
  P_{2i,w} = P_{2i-2j,w-2j} \circ P_{2j,w} .
 \end{equation*}
 Let $\hvarphi,\hpsi \in C^\infty(\mathring{X})$.
 Write
 \begin{align*}
  \MoveEqLeft[1] \int_{r>\varepsilon} \left( \hvarphi P_{2i,w}\hpsi - \hpsi P_{2i,w}\hvarphi \right) \dvol_{g_+} \\
   & = \sum_{\ell=1}^{i} \int_{r>\varepsilon} \left( (P_{2i-2\ell,w-2\ell}\hvarphi)P_{2\ell,w}\hpsi - (P_{2i-2\ell+2,w-2\ell+2}\hvarphi)P_{2\ell-2,w}\hpsi \right) \dvol_{g_+} \\
   & = \sum_{\ell=1}^{i} \int_{r>\varepsilon} \left( (P_{2i-2\ell,w-2\ell}\hvarphi)\Delta_{g_+}P_{2\ell-2,w}\hpsi - (\Delta_{g_+}P_{2i-2\ell,w-2\ell}\hvarphi)P_{2\ell-2,w}\hpsi \right) \dvol_{g_+} .
 \end{align*}
 The Divergence Theorem yields
 \begin{align*}
  \MoveEqLeft[1] \int_{r>\varepsilon} \left( \hvarphi P_{2i,w}\hpsi - \hpsi P_{2i,w}\hvarphi \right) \dvol_{g_+} \\
   & = \sum_{\ell=1}^{i} \int_{r=\varepsilon} \left( (P_{2i-2\ell,w-2\ell}\hvarphi)\nabla_{\onf}P_{2\ell-2,w}\hpsi - (\nabla_{\onf}P_{2i-2\ell,w-2\ell}\hvarphi)P_{2\ell-2,w}\hpsi \right) \darea_{g_+} ,
 \end{align*}
 where $\onf$ and $\darea_{g_+}$ are the outward-pointing unit normal and the area element, respectively, of $\{ r=\varepsilon \}$ with respect to $g_+$.
 In particular,
 \begin{equation*}
  \logpart \int_{r>\varepsilon} \left( \hvarphi P_{2i,w}\hpsi - \hpsi P_{2i,w}\hvarphi \right) \dvol_{g_+} = 0 .
 \end{equation*}
 Combining this with Equation~\eqref{eqn:compute-adjoint} and~\eqref{eqn:apply-expand-Df} yields the final result.
\end{proof}

Our main results follow from \cref{fsa-key-step} and tangentiality.

\begin{proof}[Proof of \cref{ovsienko-redou-are-fsa}]
 By \cref{ovsienko-redou}, it suffices to compute in the straight and normal ambient space associated to a representative $g \in \kc$.
 Equation~\eqref{eqn:decompose-ovsienko-redou} and \cref{fsa-key-step} express $D_{2k}$ as a linear combination of formally self-adjoint operators.
 Therefore $D_{2k}$ is formally self-adjoint.
\end{proof}
    
\begin{proof}[Proof of \cref{linear-are-fsa}]
 By \cref{linear}, it suffices to compute in the straight and normal ambient space associated to a representative $g \in \kc$.
 Combining Equation~\eqref{eqn:decompose-linear} with \cref{fsa-key-step} realizes $D_{2k,\mI}$ as a formally self-adjoint operator.
\end{proof}

\section*{Acknowledgements}
We would like to thank Shane Chern for very useful discussions.

JSC acknowledges support from a Simons Foundation Collaboration Grant for Mathematicians, ID 524601.

\bibliography{bib}

\end{document}